\documentclass[11pt]{amsart}
\usepackage{amsthm, amsmath}
\usepackage{amssymb}
\usepackage[all]{xy}

\textwidth 6in
\textheight 8.5in
\oddsidemargin 0.25in
\evensidemargin 0.25in
\topmargin -0.15in


\theoremstyle{plain}
\newtheorem{theorem}{Theorem}[section]
\newtheorem{lemma}[theorem]{Lemma}
\newtheorem{cor}[theorem]{Corollary}
\newtheorem{prop}[theorem]{Proposition}

\theoremstyle{remark}

\theoremstyle{definition}

\newtheorem{defn}[theorem]{Definition}
\newtheorem{Remark}[theorem]{Remark}

\numberwithin{equation}{subsection}
\numberwithin{theorem}{subsection}

\newcommand{\calH}{\mathcal H}

\newcommand{\calO}{\mathcal O}

\newcommand\hfld[2]{\smash{\mathop{\hbox to 10mm{\rightarrowfill}}
     \limits^{\scriptstyle#1}_{\scriptstyle#2}}}
\newcommand\hflg[2]{\smash{\mathop{\hbox to 10mm{\leftarrowfill}}
     \limits^{\scriptstyle#1}_{\scriptstyle#2}}}

\title{The Satake isomorphism for special maximal \\ parahoric Hecke algebras}
\author{Thomas J. Haines* and Sean Rostami}


\begin{document}

\thanks{*Research partially supported by NSF Focused Research Grant DMS-0554254 and NSF Grant DMS-0901723, and by a University of Maryland GRB Semester Award.}
\subjclass[2000]{Primary 11E95, 20G25; Secondary 22E20}

\begin{abstract} 
Let $G$ denote a connected reductive group over a nonarchimedean local field $F$.  Let $K$ denote a special maximal parahoric subgroup of $G(F)$.  We establish a Satake isomorphism for the Hecke algebra $\calH_K$ of $K$-bi-invariant compactly supported functions on $G(F)$.  The key ingredient is a Cartan decomposition describing the double coset space $K\backslash G(F)/K$.  As an application we define a transfer homomorphism $t: \calH_{K^*}(G^*) \rightarrow \calH_K(G)$ where $G^*$ is the quasi-split inner form of $G$.  We also describe how our results relate to the treatment of Cartier \cite{Car}, where $K$ is replaced by a special maximal compact open subgroup $\widetilde{K} \subset G(F)$ and where a Satake isomorphism is established for the Hecke algebra $\calH_{\widetilde{K}}$.
\end{abstract}

\maketitle

\markboth{\small{T. Haines} and  \small{S. Rostami}}
{\small{Satake isomorphism for special maximal parahoric Hecke algebras}}

\section{Introduction}

The Satake isomorphism plays an important role in automorphic forms and in representation theory of p-adic groups.  For global applications, one may often work with unramified groups.  We begin by recalling the Satake isomorphism in this context.  Let $G$ denote an unramified group over a nonarchimedean local field $F$.  Let $v_F$ denote a special vertex in the Bruhat-Tits building $\mathcal B(G_{\rm ad}(F))$.   Let $\widetilde{K} = \widetilde{K}_{v_F}$ denote a special maximal compact open subgroup of $G(F)$ which fixes $v_F$.  Let 
$$
\calH_{\widetilde{K}} = C^\infty_c(\widetilde{K} \backslash G(F)/\widetilde{K})
$$
denote the Hecke algebra of $\widetilde{K}$-bi-invariant compactly-supported complex-valued functions on $G(F)$.  Let $A$ denote a maximal $F$-split torus in $G$ whose corresponding apartment in $\mathcal B(G_{\rm ad}(F))$ contains $v_F$.  Let $W = W(G,A)$ denote the relative Weyl group.  Then the Satake isomorphism is a $\mathbb C$-algebra isomorphism 
\begin{equation*}
\calH_{\widetilde{K}} ~ \widetilde{\rightarrow} ~ \mathbb C[X_*(A)]^{W}.
\end{equation*}
(See \cite{Car}.)  A key ingredient is the Cartan decomposition
$$
\widetilde{K} \backslash G(F)/\widetilde{K} ~ \cong ~ W(G,A)\backslash X_*(A).
$$

Now let $G$ denote an arbitrary connected reductive group over $F$ and let $\widetilde{K}, v_F$ and so on have the same meaning as above.  A form of the Satake isomorphism for such $G$ was described by Cartier \cite{Car}, but it is less explicit than that above.  It identifies $\calH_{\widetilde{K}}$ with the ring of functions $$\mathbb C[M(F)/M(F)^1]^{W},$$ where $M := {\rm Cent}_G(A)$ is a minimal $F$-Levi subgroup of $G$ and $M(F)^1$ is the unique maximal compact open subgroup of $M(F)$.  The quotient $M(F)/M(F)^1$ is a free abelian group $\widetilde{\Lambda}_M$ which contains $X_*(A)$ and has the same rank.  (In \cite{Car}, our $\widetilde{\Lambda}_M$ is denoted $\Lambda(M)$ or simply $\Lambda$.)  As Cartier explains, in this general context we have a Satake isomorphism
$$
\calH_{\widetilde{K}} ~ \cong ~ \mathbb C[\widetilde{\Lambda}_M]^W,
$$
and a Cartan decomposition
$$
\widetilde{K} \backslash G(F)/\widetilde{K} ~ \cong ~ W(G,A)\backslash \widetilde{\Lambda}_M.
$$
However, Cartier does not identify $\widetilde{\Lambda}_M$ explicitly, except in special cases. 

Now let $K = K_{v_F}$ denote the special maximal parahoric subgroup of $G(F)$ corresponding to $v_F$; it is a normal subgroup of $\widetilde{K}_{v_F}$ having finite index (see section \ref{Ktilde_char_sec}).  This paper concerns the Hecke algebra $\calH_K = C^\infty_c(K \backslash G(F)/K)$.  In several situations, it is more appropriate to consider $\calH_K$ instead of $\calH_{\widetilde K}$, for example in relation to Shimura varieties having parahoric level structure (see \cite{Rap} and \cite{H05}).

Let $M(F)_1 \subset M(F)$ denote the unique parahoric subgroup of $M(F)$; it is a finite-index normal subgroup of $M(F)^1$.  Our main result is the following theorem.

\begin{theorem} \label{Sat_thm}
Let $\Lambda_M := M(F)/M(F)_1$.  There is a canonical isomorphism 
\begin{equation*}
\calH_K ~ \widetilde{\rightarrow} ~ \mathbb C[\Lambda_M]^{W}.
\end{equation*}  
The group $\Lambda_M$ is a finitely generated abelian group which can be explicitly described and which has the property that $\widetilde{\Lambda}_M = \Lambda_{M}/torsion$.  Moreover, $\widetilde{K}/K \cong \Lambda_{M,\rm tor}$, the torsion subgroup of $\Lambda_M$.
\end{theorem}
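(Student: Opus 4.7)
My plan follows the classical Satake--Macdonald template, adapted to the parahoric setting. The central combinatorial object is the Iwahori--Weyl group $\widetilde{W} := N_G(A)(F)/M(F)_1$, which fits into an extension
$$1 \to \Lambda_M \to \widetilde{W} \to W \to 1,$$
split (as a semidirect product $\widetilde{W} = W \ltimes \Lambda_M$) by a choice of lift of the special vertex $v_F$. The first step is to establish the Cartan decomposition
$$K \backslash G(F)/K \;\cong\; W \backslash \Lambda_M.$$
This I would obtain from the general Bruhat--Tits double coset decomposition $G(F) = \bigsqcup_{w} K \dot{w} K$ indexed by $W_K \backslash \widetilde{W} / W_K$, where $W_K$ is the finite Weyl group of $K$ inside $\widetilde{W}$, combined with the defining feature of a special vertex: that $W_K$ equals the entire relative Weyl group $W$ inside $\widetilde{W}$. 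The double coset space then collapses to $W \backslash \widetilde{W}/W = W \backslash \Lambda_M$.

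Next, fix a minimal $F$-parabolic $P = MN$ and define the Satake transform
$$S(f)(m) = \delta_P(m)^{1/2}\int_{N(F)} f(mn)\,dn, \qquad m \in M(F).$$
Using the Iwasawa decomposition $G(F) = P(F)K$ together with the equality $M(F) \cap K = M(F)_1$ (a standard consequence of the parahoric formalism), one verifies that $S(f)$ depends only on the image of $m$ in $\Lambda_M = M(F)/M(F)_1$ and that $S$ is a $\CC$-algebra homomorphism $\calH_K \to \CC[\Lambda_M]$. The $W$-invariance of the image follows by the standard Gindikin--Karpelevich change of variables, using that $N_G(A)(F)$-conjugation permutes the unipotent radicals compatibly. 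To see $S$ is an isomorphism onto $\CC[\Lambda_M]^W$, use the Cartan decomposition of the first step: the characteristic functions $\mathbf{1}_{K\dot\lambda K}$ as $\lambda$ ranges over dominant $W$-orbit representatives in $\Lambda_M$ form a basis of $\calH_K$, and $S$ is upper-triangular on this basis with respect to the dominance order, with a nonzero power-of-$q$ leading coefficient.

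For the remaining structural assertions: the Kottwitz homomorphism $\kappa_M$ identifies $\Lambda_M$ with $X^*(Z(\widehat{M})^{I_F})$, so $\Lambda_M$ is finitely generated abelian. Since $M(F)^1$ is the unique maximal compact subgroup of $M(F)$, the quotient $\widetilde{\Lambda}_M = M(F)/M(F)^1$ is free abelian and must coincide with the torsion-free quotient of $\Lambda_M$; hence $\Lambda_{M,\mathrm{tor}} = M(F)^1/M(F)_1$. The identification $\widetilde{K}/K \cong \Lambda_{M,\mathrm{tor}}$ I would prove by tracking the stabilizer of $v_F$ inside $\widetilde{W}$: on the one hand $\widetilde{K}/K$ is the subgroup of the length-zero part $\Omega_G = \widetilde{W}/W_{\mathrm{aff}}$ which fixes $v_F$; on the other, under the splitting $\widetilde{W} = W \ltimes \Lambda_M$ at $v_F$, this fixer intersects $\Lambda_M$ exactly in $\Lambda_{M,\mathrm{tor}}$, because a translation fixes $v_F$ in the apartment if and only if it is torsion.

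I expect the main obstacle to be the final identification $\widetilde{K}/K \cong \Lambda_{M,\mathrm{tor}}$, which demands a careful diagram chase matching the component-group description of the $v_F$-stabilizer (via Bruhat--Tits and Kottwitz) with the torsion part of $M(F)/M(F)_1$. A secondary technical burden is extracting the equality $W_K = W$ at a special vertex and the identity $M(F) \cap K = M(F)_1$ cleanly from the general Bruhat--Tits/Haines--Rapoport framework, since both feed the Cartan decomposition and the descent of the Satake integral to $\Lambda_M$.
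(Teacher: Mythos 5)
Your outline is in the same overall spirit as the paper---Cartan decomposition, Satake transform via Iwasawa, Weyl invariance, upper triangularity, then the structure of $\Lambda_M$---but it leans on two statements as if they were ``standard'' when in fact they are precisely the nontrivial content that distinguishes the parahoric case from the classical (special maximal compact) case, and the paper spends most of its effort establishing them.

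First, you assert that ``$W_K$ equals the entire relative Weyl group $W$ inside $\widetilde{W}$'' is ``the defining feature of a special vertex.'' It is not. The definition of special (as in Tits 1.9) is a condition on root systems: the vector parts of affine roots vanishing at $v_F$ give the full relative root system $\Phi(G,A)$. That $N_G(A)(F)\cap K$ surjects onto $W(G,A)$---equivalently, that $\widetilde{W}_K^\sigma \cong W(G,A)$---is a theorem, not a definition, and for a parahoric $K$ (as opposed to the full stabilizer $\widetilde{K}$) it is genuinely delicate. The paper proves it (Lemma~5.1) by identifying $\widetilde{W}_K$ with the Weyl group of the reductive quotient of the special fiber of the parahoric group scheme (via Haines--Rapoport), observing that this group is quasi-split over the finite residue field $k_F$, and then invoking Tits 3.5.1 to identify its root system with $\Phi_{v_F}=\Phi(G,A)$. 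Without this step, your Cartan decomposition argument does not go through, and neither does your Gindikin--Karpelevich $W$-invariance argument (which needs representatives of $W$ \emph{inside $K$}, not merely inside $\widetilde{K}$). Second, the ``general Bruhat--Tits double coset decomposition $G(F)=\bigsqcup K\dot w K$'' for parahoric subgroups of $G(F)$ is not something one can simply cite: the Haines--Rapoport result is proved over $L=\widehat{F^{\mathrm{un}}}$, where $G$ is quasi-split, and the paper then passes to $\sigma$-invariants; obtaining the $F$-statement directly is not a triviality, and your construction of $\widetilde W$ directly over $F$ sidesteps rather than solves this descent problem.

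Two smaller issues: the paper establishes $W$-invariance of $f^\vee$ not by Gindikin--Karpelevich integral manipulations but via Cartier's orbit-integral formula $f^\vee(m)=D(m)\int_{G/A}f(gmg^{-1})\,dg/da$, again using representatives of $W$ inside $K$; your route would work but has the same prerequisite. And your ``dominance order'' on $\Lambda_M$ is underspecified when $\Lambda_M$ has torsion (it typically does; indeed that torsion is exactly $\widetilde K/K$). The paper instead uses the Bruhat order on $\widetilde W$, with the minimal double-coset representative $\tilde x$, and a concrete BN-pair computation (Lemma~10.2) to get upper triangularity. You correctly flag $\widetilde K/K\cong\Lambda_{M,\mathrm{tor}}$ as an obstacle, and your sketch of that identification is reasonable; but the more fundamental gap is earlier, in the Cartan decomposition and the identification of $W_K$ with $W(G,A)$.
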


When $G$ is unramified over $F$ or when $G$ is semi-simple and simply connected, it turns out that $\widetilde{K} = K$ and $\widetilde{\Lambda}_M \cong \Lambda_M$ (see section \ref{structure_sec}) so that our theorem does not give any new information in those cases.  However our results are new in case $\widetilde{K} \neq K$, and different methods from \cite{Car} are needed to prove them.  For ramified groups in particular, our results are expected to play some role in the study of Shimura varieties with parahoric level structure at $p$.  For more about ramified groups and Shimura varieties with parahoric level the reader should consult \cite{Rap}, \cite{PR}, and \cite{Kr}. 

In order to describe $\Lambda_M$, we need to recall some notation and results of Kottwitz \cite{Ko97}.  Let $F^s$ denote a separable closure of $F$, and let $F^{\rm un}$ denote the maximal unramified extension of $F$ in $F^s$.  Let $L = \widehat{F^{\rm un}}$ denote the completion of $F^{\rm un}$ with respect to the valuation on $F^{\rm un}$ which extends the normalized valuation on $F$.  Let $I = {\rm Gal}(F^s/F^{\rm un}) \cong {\rm Gal}(L^s/L)$ denote the inertia subgroup of ${\rm Gal}(F^s/F)$, and let $\sigma \in {\rm Aut}(L/F)$ denote the Frobenius automorphism.  In \cite{Ko97} Kottwitz defined a surjective homomorphism
$$
\kappa_G: G(L) \rightarrow X^*(Z(\widehat{G}))_I,
$$
and in loc.~cit. $\S 7.7$ he also proved that this induces a surjective homomorphism
$$
\kappa_G: G(F) \rightarrow X^*(Z(\widehat{G}))^\sigma_I
$$
of the groups of $\sigma$-invariants.  Set $G(L)_1 := {\rm ker}(\kappa_G)$ and $G(F)_1 := G(F) \cap G(L)_1$.  (When $G = M$, this is consistent with our definition of $M(F)_1$ above, see Lemmas \ref{parahoric_cap_Levi},  \ref{para_cap_min_Levi}.)

The Iwahori-Weyl group  $\widetilde{W}$ for $G$ carries a natural action under $\sigma$ and contains a $\sigma$-invariant abelian subgroup $\Omega_G$ (the subgroup of {\em length-zero elements}).  By choosing representatives in the normalizer of $A$ we may embed $\widetilde{W}^\sigma$ set-theoretically into $G(F)$, and then $\Omega_G^\sigma$ is mapped by $\kappa_G$ isomorphically onto $X^*(Z(\widehat{G}))^\sigma_I$ (see section \ref{notation_sec}).  The following is the sought-after explicit description of $\Lambda_M$:

\begin{prop}
The Kottwitz homomorphism induces an isomorphism $$\Lambda_M = M(F)/M(F)_1 \cong X^*(Z(\widehat{M}))_I^\sigma.$$  We can also identify $\Lambda_M$ with $\Omega_M^\sigma$ via the Kottwitz isomorphism $\kappa_M: \Omega_M^\sigma ~ \widetilde{\rightarrow} ~ X^*(Z(\widehat{M}))_I^\sigma$.
\end{prop}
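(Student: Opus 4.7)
The plan is to derive both assertions by a direct application of Kottwitz's results in \cite{Ko97} to the Levi subgroup $M$ in place of $G$; there is essentially no new geometric content beyond what has already been recalled in the excerpt.

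First I would consider $\kappa_M$ on the $F$-points of $M$. By Kottwitz \cite{Ko97}, \S 7.7 (applied with $M$ in place of $G$), the induced homomorphism
$$\kappa_M : M(F) \longrightarrow X^*(Z(\widehat{M}))_I^\sigma$$
is surjective. Its kernel, by the definition of $M(F)_1$ in the excerpt, is $M(F)\cap M(L)_1$. By the previously stated Lemmas \ref{parahoric_cap_Levi} and \ref{para_cap_min_Levi}, this intersection coincides with the unique parahoric subgroup of $M(F)$, which is what is now denoted $M(F)_1$. The first isomorphism theorem then yields the canonical isomorphism
$$\Lambda_M = M(F)/M(F)_1 \;\stackrel{\sim}{\longrightarrow}\; X^*(Z(\widehat{M}))_I^\sigma.$$

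For the second identification, I would invoke the fact already recalled in section \ref{notation_sec}, which, applied with $G$ replaced by $M$, says that $\kappa_M$ restricts to an isomorphism $\Omega_M^\sigma \stackrel{\sim}{\to} X^*(Z(\widehat{M}))_I^\sigma$. Composing this with the inverse of the isomorphism from the previous paragraph produces the identification $\Omega_M^\sigma \cong \Lambda_M$ through a single instance of the Kottwitz homomorphism $\kappa_M$, as asserted.

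The only substantive ingredient beyond the formal machinery is the equality $M(F)_1 = M(F)\cap M(L)_1$, that is, the identification of the unique parahoric of $M(F)$ with the $F$-points of the kernel of $\kappa_M$ on $M(L)$; this is precisely what is packaged into Lemmas \ref{parahoric_cap_Levi} and \ref{para_cap_min_Levi}. Once these are in hand, together with Kottwitz's surjectivity statement on $\sigma$-invariants and his identification of $\Omega_M$ with $X^*(Z(\widehat{M}))_I$, the proposition is immediate.
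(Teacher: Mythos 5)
Your proof is correct and follows essentially the same route the paper takes (the paper does not give a standalone proof of the proposition; instead the ingredients are distributed across the parenthetical remark in the introduction about consistency of the two definitions of $M(F)_1$, Lemmas \ref{parahoric_cap_Levi} and \ref{para_cap_min_Levi}, and the $\sigma$-equivariant isomorphism $\kappa_G|_{\Omega_G}$ from Section \ref{notation_sec} applied with $G=M$). You have assembled these pieces in exactly the intended way.
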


As before, the main step in the proof of Theorem \ref{Sat_thm} is an appropriate Cartan decomposition.  

\begin{theorem} \label{Cart_decomp_stmt}
The embedding $\Omega_M^\sigma \subset \widetilde{W}^\sigma \hookrightarrow G(F)$ determines a bijection
$$
W(G,A) \backslash \Omega_M^\sigma \cong K \backslash G(F) /K.
$$
Equivalently, via the isomorphism $\kappa_M: \Omega^\sigma_M ~\widetilde{\rightarrow} ~ 
X^*(Z(\widehat{M}))^\sigma_I$, we have a bijection
$$
W(G,A) \backslash X^*(Z(\widehat{M}))^\sigma_I ~ \widetilde{\rightarrow} ~ K \backslash G(F)/K.
$$
\end{theorem}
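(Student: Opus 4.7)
The plan is to combine an Iwahori--Bruhat decomposition over $F$ with the translation-part structure of $\widetilde{W}^\sigma$ anchored at the special vertex $v_F$.

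First, choose an alcove $\mathbf{a}$ in the $F$-apartment of $A$ whose closure contains $v_F$; being $F$-rational, $\mathbf{a}$ is $\sigma$-stable, and let $J \subset K$ denote the corresponding Iwahori subgroup. The Iwahori--Bruhat decomposition over $F$ asserts
$$
G(F) \;=\; \bigsqcup_{w \in \widetilde{W}^\sigma} J \dot w J,
$$
where each $\dot w$ is a chosen lift of $w$ to $N_G(A)(F)$. Because $v_F$ is special, the reflections in the $F$-walls through $v_F$ generate a subgroup of $\widetilde{W}^\sigma$ isomorphic to the relative Weyl group $W(G,A)$, and this subgroup is precisely the stabilizer of $v_F$ in $\widetilde{W}^\sigma$. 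Since $K$ is generated by $J$ together with lifts of these reflections, $K = J \cdot W(G,A) \cdot J$, so the Iwahori decomposition collapses to
$$
K \backslash G(F)/K \;\cong\; W(G,A) \backslash \widetilde{W}^\sigma / W(G,A).
$$

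Next, I would identify a set of representatives for $W(G,A) \backslash \widetilde{W}^\sigma / W(G,A)$ sitting inside $\Omega_M^\sigma$. The key structural point is that every $w \in \widetilde{W}^\sigma$ admits a decomposition $w = u \cdot v$, where $u$ lies in the image of $\widetilde{W}_M \hookrightarrow \widetilde{W}$ and $v$ fixes $v_F$; taking $\sigma$-fixed points and modding by $W(G,A)$ on the right leaves only the image of $\widetilde{W}_M^\sigma$. Within $\widetilde{W}_M$, a further reduction identifies the $W(G,A)$-double cosets with the $W(G,A)$-orbits on the length-zero elements $\Omega_M \subset \widetilde{W}_M$, yielding after $\sigma$-fixing the bijection
$$
W(G,A) \backslash \Omega_M^\sigma \;\cong\; K \backslash G(F) / K.
$$
The equivalent formulation via $X^*(Z(\widehat{M}))_I^\sigma$ is then immediate from the Kottwitz isomorphism $\kappa_M$ of the preceding proposition.

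The main obstacle is the Galois descent from $L$ to $F$. When $G$ is ramified, $v_F$ need not be special, or even a vertex, in the $L$-apartment; so the absolute finite Weyl group over $L$ is strictly larger than $W(G,A)$, and one cannot simply take $\sigma$-fixed points of an $L$-level decomposition. One must verify that the $F$-rational reflections at $v_F$ realize $W(G,A)$ exactly inside $\widetilde{W}^\sigma$, that every double coset $KgK$ admits a representative in $N_G(A)(F)$ rather than merely in $N_G(A)(L)^\sigma$, and that the distinguished representatives can be chosen in $\Omega_M^\sigma$. Each of these relies on the special-vertex hypothesis together with careful use of the Kottwitz homomorphism and the structural results recalled in section \ref{notation_sec}.
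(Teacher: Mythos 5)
Your outline follows essentially the same skeleton as the paper's proof: pass from $K\backslash G(F)/K$ to double cosets in $\widetilde{W}^\sigma$ by $\widetilde{W}_K^\sigma$, identify $\widetilde{W}_K^\sigma$ with $W(G,A)$, reduce the double coset space to $\widetilde{W}_M^\sigma$ and finally to $\Omega_M^\sigma$. The difficulty is that three of the steps you describe as reductions to be ``verified carefully'' are in fact the entire substance of the argument, and at least one of them is presented as if it followed formally when it does not.

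The step that genuinely does not work as stated is the assertion that after writing $w=uv$ with $u\in\widetilde{W}_M$ and $v\in\widetilde{W}_K$, ``taking $\sigma$-fixed points and modding by $W(G,A)$ on the right leaves only the image of $\widetilde{W}_M^\sigma$.'' If $w$ is $\sigma$-fixed, the factors $u$ and $v$ need not be individually $\sigma$-fixed, and $\widetilde{W}_M\cap\widetilde{W}_K$ is not trivial, so the identity $\widetilde{W}^\sigma=\widetilde{W}_M^\sigma\cdot\widetilde{W}_K^\sigma$ is not a formal consequence of $\widetilde{W}=\widetilde{W}_M\cdot\widetilde{W}_K$. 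The paper proves this (its Lemma \ref{Cartan_lemma}(II)) by establishing a bijection $W(G,A)\ \widetilde{\rightarrow}\ [W(G,S)/W(M,S)]^\sigma$ (Lemma \ref{Weyl_bijection}), whose surjectivity relies on the observation that any $x\in N_G(S)$ representing a $\sigma$-fixed coset mod $W(M,S)$ must already lie in $N_G(A)$ because $xAx^{-1}$ is then the maximal $F$-split torus in $S$, hence equals $A$. That is a genuine argument, not a routine fixed-point computation. Likewise, the identification $\widetilde{W}_K^\sigma\cong W(G,A)$ (Lemma \ref{taut_hom}) is nontrivial: the paper has to identify $\widetilde{W}_K$ with the Weyl group of the reductive quotient of the special fiber of the Bruhat--Tits group scheme, descend this to $k_F$ (using that the latter is finite, hence every $k_F$-group is quasi-split), and then invoke the special-vertex hypothesis to match the root systems. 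Finally, the passage from $\widetilde{W}_M^\sigma$ to $\Omega_M^\sigma$ is not, as you describe it, a matter of ``identifying $W(G,A)$-double cosets with $W(G,A)$-orbits on $\Omega_M$'' — rather it is the equality $\widetilde{W}_M^\sigma=\Omega_M^\sigma$, which holds because $W_{M,\mathrm{aff}}^\sigma=1$ (Lemma \ref{W_M_aff_sigma}), and this in turn relies on the anisotropy of $M_{\rm sc}$ over $F$, so that $M_{\rm sc}(F)$ is a single Iwahori. Your description of this step is not correct as phrased; and all three steps need to be supplied to have a proof rather than a plan.
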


We give additional information about the finitely generated abelian group $\Lambda_M$ in section \ref{structure_sec}.  For example, we prove that if $G$ is an inner form of a split group, then $\Lambda_M = X^*(Z(\widehat{M})) = X_*(T)_\sigma$ (see Corollary \ref{G*_split_cor}).  

Finally, let $G^*$ denote the quasi-split inner form of $F$, and consider special maximal parahoric subgroups $K^* \subset G^*(F)$ and $K \subset G(F)$.  In section \ref{transfer_sec}, we define a canonical transfer homomorphism $t: \calH_{K^*}(G^*) \rightarrow \calH_K(G)$, and we establish some of its basic properties.  

This article relies heavily on the ideas of Kottwitz, especially as they are manifested in the article \cite{HR}.  The main theorems of \cite{HR} provide the starting points for the proof of Theorem \ref{Cart_decomp_stmt}.

\section{Notation} \label{notation_sec}

\subsection{Ring-theoretic notation}  Let $\mathcal O = \mathcal O_F$ (resp. $\mathcal O_L$) denote the ring of integers in the field $F$ (resp. $L$).  Let $\varpi$ denote a uniformizer of $F$ (resp. $L$), and let $k_F$ denote the residue field of $F$.  We may identify the residue field $k_L$ with an algebraic closure of $k_F$.  Let $\Gamma := {\rm Gal}(F^s/F)$.

Throughout this paper, if $J \subset G(F)$ denotes a compact open subgroup, we make
$$
\calH_J := C^\infty_c(J\backslash G(F)/J)
$$
a convolution algebra by using the Haar measure on $G(F)$ which gives $J$ volume 1.

\subsection{Buildings notation}  Let $\mathcal B(G(L))$ (resp. $\mathcal B(G(F))$) denote the Bruhat-Tits building of $G(L)$ (resp. $G(F)$).  The building $\mathcal B(G(L))$ carries an action of $\sigma$.  By \cite{BT2}, 5.1.25, we have an identification $\mathcal B(G(F)) = \mathcal B(G(L))^\sigma$.  Moreover, there is a bijection ${\mathbf a}_J \mapsto {\mathbf a}_J^\sigma$ from the set of $\sigma$-stable facets in $\mathcal B(G(L))$ to facets in $\mathcal B(G(F))$ (\cite{BT2}, 5.1.28).  This bijection sends alcoves to alcoves (\cite{BT2}, 5.1.14).  It also follows from loc.~cit.~that every $\sigma$-stable facet $\mathbf a_J$ in $\mathcal B(G(L))$ is contained in the closure $\overline{\mathbf a}$ of a $\sigma$-stable alcove $\mathbf a$.

Let $v_F$ denote a special vertex in $\mathcal B(G_{\rm ad}(F))$ (\cite{Tits}, 1.9).  Let $A$ denote a maximal $F$-split torus in $G$ whose corresponding apartment in $\mathcal B(G_{\rm ad}(F))$ contains $v_F$. Let $\mathcal A$ (resp. $\mathcal A_{\rm ad}$) denote the apartment in $\mathcal B(G(F))$ (resp. 
$\mathcal B(G_{\rm ad}(F))$) corresponding to $A$.   Let $V_{G(F)}$ denote the real vector space $X_*(Z(G))_\Gamma \otimes \mathbb R$.  There is a simplicial isomorphism  (\cite{Tits}, 1.2)
$$
\mathcal A \cong \mathcal A_{\rm ad} \times V_{G(F)}.
$$
Therefore, there is a minimal dimensional facet ${\mathbf a}_0^\sigma$ in $\mathcal A$ associated to a $\sigma$-stable facet ${\mathbf a}_0 \subset \mathcal B(G(L))$, such that
$$
{\mathbf a}_0^\sigma \cong \{ v_F \} \times V_{G(F)}.
$$

We consider parahoric (or Iwahori) subgroups in the sense of \cite{BT2}, 5.2.  That is, to a facet ${\bf a}_J \subset \mathcal B(G(L))$ we associate an $\mathcal O_L$-group scheme $\mathcal G^\circ_{{\bf a}_J}$ with connected geometric fibers, whose group of $\mathcal O_L$-points fixes identically the points of ${\bf a}_J$.  We often write $J(L) := \mathcal G^\circ_{{\bf a}_J}(\mathcal O_L)$.  By \cite{BT2}, 5.2, if ${\bf a}_J$ is $\sigma$-stable we get a parahoric subgroup $J(F) := J(L)^\sigma$ in $G(F)$ and this is associated to the facet ${\bf a}_J^\sigma$ in $\mathcal B(G(F))$.  Moreover, every parahoric subgroup of $G(F)$ is of this form for a unique $\sigma$-stable facet ${\bf a}_J$. 

Now fix a $\sigma$-stable alcove ${\bf a}$ whose closure contains ${\bf a}_0$.  Let $I(L)$ (resp. $K(L)$) denote the Iwahori (resp. parahoric) subgroup of $G(L)$ corresponding to the $\sigma$-stable alcove ${\bf a}$ (resp. facet ${\bf a}_0$).   Then $I := I(F) = I(L)^\sigma$ is the Iwahori subgroup of $G(F)$ corresponding to ${\bf a}^\sigma$.  Also, $K := K(F) = K(L)^\sigma$ is a special maximal parahoric subgroup of $G(F)$ corresponding to ${\bf a}_0^\sigma$ (or equivalently, to $v_F$).

\subsection{Weyl groups and Iwahori-Weyl groups}

For a torus $S$ in $G$, let $N_G(S) = {\rm Norm}_G(S)$ denote its normalizer and $C_G(S) = 
{\rm Cent}_G(S)$ its centralizer.  Let $W(G,S) := N_G(S)/C_G(S)$ denote its Weyl group. 

Fix the torus $A$ as before.   From now on, let $S$ be a maximal $L$-split torus that is defined over $F$ and contains $A$ (\cite{BT2}, 5.1.12).  Let $T = C_G(S)$, a maximal torus of $G$ (defined over $F$) since $G_L$ is quasi-split by Steinberg's theorem. 

We need to recall definitions and facts about Iwahori-Weyl groups; we refer the reader to \cite{HR} for details. Let $T(L)_1 = {\rm ker}(\kappa_T)$, a normal subgroup of $N_G(S)(L)$.  
Let $\widetilde{W} := N_G(S)(L)/T(L)_1$ denote the {\em Iwahori-Weyl} group for $G$.  It carries an obvious action of $\sigma$.  Let $\mathcal A_L$ denote the apartment of $\mathcal B(G(L))$ corresponding to $S$, which we may assume contains the alcove ${\bf a}$ we fixed above.  We let $W_{\rm aff}$ denote the {\em affine Weyl group}, which is a Coxeter group generated by the reflections through the walls of ${\bf a}$.  The group $\widetilde{W}$ acts on the set of all alcoves in the apartment of $\mathcal B(G(L))$ corresponding to $S$; let $\Omega_G = \Omega_{G,\bf a}$ denote the stabilizer of ${\bf a}$.   
There is a $\sigma$-equivariant decomposition
$$
\widetilde{W} = W_{\rm aff} \rtimes \Omega_G.
$$
We extend the Bruhat order $\leq$ and the length function $\ell$ from $W_{\rm aff}$ to $\widetilde{W}$ in the obvious way.  We can identify $W_{\rm aff}$ with the Iwahori-Weyl group associated to the pair $G_{\rm sc}, S_{\rm sc}$, where $S_{\rm sc}$ is the pull-back of $(S \cap G_{\rm der})^\circ$ via $G_{\rm sc} \rightarrow G_{\rm der}$.

We can embed $\widetilde{W}$ {\em set-theoretically} into $G(L)$ by choosing a set-theoretic section of the surjective homomorphism $N_G(S)(L) \rightarrow \widetilde{W}$.  Since $T(L)_1 \subset {\rm ker}(\kappa_G)$, we easily see that the restriction of $\kappa_G$ to $\widetilde{W} \hookrightarrow G(L)$ gives a {\em homomorphism}
$$
\kappa_G: \widetilde{W} \rightarrow  X^*(Z(\widehat{G}))_I
$$
which is surjective and $\sigma$-equivariant and whose kernel is $W_{\rm aff}$.   

\section{Cartan decomposition: reduction to the key lemma}

Changing slightly the notation of \cite{HR}, we set
$$
\widetilde{W}_K := (N_G(S)(L)\cap K(L))/T(L)_1.
$$
We write $\widetilde{W}_K^\sigma := (\widetilde{W}_K)^\sigma$.  

Our starting point is the following fact (see \cite{HR}, esp. Remark 9):  the map $K(L)nK(L) \mapsto n \in \widetilde{W}$ induces a bijection
\begin{equation*}
K(L) \backslash G(L) / K(L) \cong \widetilde{W}_K \backslash \widetilde{W} / \widetilde{W}_K,
\end{equation*}
and taking fixed-points under $\sigma$ yields a bijection 
\begin{equation} \label{HR_decomp}
K(F) \backslash G(F) / K(F) \cong \widetilde{W}_K^\sigma \backslash \widetilde{W}^\sigma / \widetilde{W}_K^\sigma.
\end{equation}

The Cartan decomposition follows immediately from the key lemma below, which allows us to describe the right hand side of (\ref{HR_decomp}) in the desired way.  To state this we note that the $\sigma$-stable alcove ${\bf a}$ is contained in a unique $\sigma$-stable alcove ${\bf a}^M$ in the apartment $\mathcal A^M_L \subset \mathcal B(M(L))$ corresponding to $S$.  As before, we define $\Omega_M \subset \widetilde{W}_M$ to be the stabilizer of ${\bf a}^M$ under the action of $\widetilde{W}_M$ on the alcoves in $\mathcal A^M_L$.

\begin{lemma} \label{Cartan_lemma}
\begin{enumerate}
\item[(I)] There is a tautological isomorphism $\widetilde{W}_K^\sigma ~ \widetilde{\rightarrow} ~ W(G,A)$ which allows us to view $W(G,A)$ as a subgroup of $\widetilde{W}^\sigma$.
\item[(II)] There is a decomposition $\widetilde{W}^\sigma = \widetilde{W}^\sigma_M \cdot W(G,A)$, and $W(G,A)$ normalizes $\widetilde{W}^\sigma_M$.   
\item[(III)] We have $W_{M,\rm aff}^\sigma = 1$, and hence because of the $\sigma$-equivariant decomposition
$$
\widetilde{W}_M = W_{M, \rm aff} \rtimes \Omega_M$$
we have $\widetilde{W}^\sigma = \Omega_M^\sigma \rtimes W(G,A).$
\end{enumerate}
\end{lemma}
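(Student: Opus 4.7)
The strategy is to exploit the $\sigma$-equivariant semidirect product decomposition $\widetilde{W} = X_*(T)_I \rtimes W_0$ (with $W_0 := W(G_L, S)$) provided by the $\sigma$-fixed special vertex $v_F$: identifying $W_0$ with the stabilizer of $v_F$ in $\widetilde{W}$, both factors are $\sigma$-stable. The same construction gives the sub-semidirect product $\widetilde{W}_M = X_*(T)_I \rtimes W_0^M$ with $W_0^M := W(M_L, S)$. By uniqueness of the decomposition, $\sigma$-invariants are compatible with the semidirect product, yielding $\widetilde{W}^\sigma = X_*(T)_I^\sigma \rtimes W_0^\sigma$ and $\widetilde{W}_M^\sigma = X_*(T)_I^\sigma \rtimes (W_0^M)^\sigma$.

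Part (I) is immediate: $\widetilde{W}_K$ equals the stabilizer $W_0$ of $v_F$ in $\widetilde{W}$, so $\widetilde{W}_K^\sigma = W_0^\sigma$, and $W_0^\sigma = W(G, A)$ is the standard Borel--Tits identification for quasi-split groups over $L$ (see \cite{HR}). For (II), any $\widetilde{w} \in \widetilde{W}^\sigma$ decomposes uniquely as $\widetilde{w} = \lambda \cdot w$ with $\lambda \in X_*(T)_I^\sigma \subset \widetilde{W}_M^\sigma$ and $w \in W_0^\sigma = W(G, A)$, proving the product decomposition. Normalization follows because lifts of $W(G, A)$ to $N_G(A)(F)$ preserve $A$ and hence preserve $M = C_G(A)$, so they normalize $\widetilde{W}_M = N_M(S)(L)/T(L)_1$ and its $\sigma$-invariants.

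For (III), first observe $(W_0^M)^\sigma = 1$: such an element lies in $W_0^\sigma = W(G, A)$ by (I), but every element of $W_0^M$ comes from $N_M(S)(L) \subset C_G(A)(L)$ and so acts trivially on $A$; faithfulness of the $W(G, A)$-action on $A$ forces triviality. Hence $\widetilde{W}_M^\sigma = X_*(T)_I^\sigma$. The $\sigma$-equivariant splitting $\widetilde{W}_M = W_{M, \rm aff} \rtimes \Omega_M$ (both are $\sigma$-stable: $W_{M, \rm aff} = \ker \kappa_M$ with $\kappa_M$ $\sigma$-equivariant, and $\Omega_M$ stabilizes the $\sigma$-stable alcove $\mathbf{a}^M$) gives $\widetilde{W}_M^\sigma = W_{M, \rm aff}^\sigma \rtimes \Omega_M^\sigma$. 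Under Kottwitz's isomorphisms $X_*(T)_I^\sigma \cong T(F)/T(F)_1$ and $\Omega_M^\sigma \cong X^*(Z(\widehat{M}))_I^\sigma \cong M(F)/M(F)_1$, the quotient $\widetilde{W}_M^\sigma \twoheadrightarrow \Omega_M^\sigma$ is identified with the natural map $T(F)/T(F)_1 \to M(F)/M(F)_1$. This map is injective by Lemma \ref{parahoric_cap_Levi}, which gives $T(F)_1 = T(F) \cap M(F)_1$; hence $W_{M, \rm aff}^\sigma = 1$ and $\widetilde{W}_M^\sigma = \Omega_M^\sigma$. Combining with (II), and noting $\Omega_M^\sigma \cap W(G, A) \subset X_*(T)_I^\sigma \cap W_0^\sigma = 1$, yields the semidirect product $\widetilde{W}^\sigma = \Omega_M^\sigma \rtimes W(G, A)$.

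The hard step is the identification $W_{M, \rm aff}^\sigma = 1$, which rests on two nontrivial inputs: the faithful-action argument that eliminates $(W_0^M)^\sigma$ (crucially using $A \subset Z(M)$), and the compatibility of parahorics $T(F)_1 = T(F) \cap M(F)_1$ of Lemma \ref{parahoric_cap_Levi} needed for injectivity of $T(F)/T(F)_1 \to M(F)/M(F)_1$. Once these are in hand, everything else is just the bookkeeping of the $\sigma$-equivariant semidirect product.
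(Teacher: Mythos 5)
There is a genuine gap at the foundation of the argument. You assume that the $\sigma$-stable facet $\mathbf{a}_0 \subset \mathcal B(G(L))$ corresponding to the special vertex $v_F$ is itself a \emph{special vertex} of $\mathcal B(G(L))$, so that $\widetilde{W}_K$ equals the full finite Weyl group $W_0 = W(G_L,S)$ and the splitting $\widetilde{W} = X_*(T)_I \rtimes W_0$ is $\sigma$-equivariant with $W_0$ matching the stabilizer of $\mathbf{a}_0$. This is false in general, and indeed is precisely the subtlety that forces the paper to argue differently. Concretely, take $G = D^\times$ for $D$ a central division algebra of degree $d > 1$ over $F$: then $G_L \cong \mathrm{GL}_d(L)$, the unique vertex of $\mathcal B(G_{\rm ad}(F))$ corresponds to an \emph{alcove} $\mathbf{a}_0$ of $\mathcal B(\mathrm{PGL}_d(L))$, the parahoric $K(L)$ is an Iwahori subgroup, and $\widetilde{W}_K = 1$ while $W_0 = S_d$. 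In such cases there is no $\sigma$-fixed special vertex in $\mathcal A_L$ at all, so no $\sigma$-equivariant splitting of $0 \to X_*(T)_I \to \widetilde{W} \to W(G,S) \to 0$ compatible with $\widetilde{W}_K$ exists, and the identity $\widetilde{W}^\sigma = X_*(T)_I^\sigma \rtimes W_0^\sigma$ on which (I), (II), and the elimination of $(W_0^M)^\sigma$ in (III) all rest is unavailable. The auxiliary claim ``$W_0^\sigma = W(G,A)$ by the standard Borel--Tits identification for quasi-split groups over $L$'' is also unjustified: $G$ being quasi-split over $L$ does not give $W(G,S)^\sigma = W(G,A)$ (that holds when $G$ is quasi-split over $F$, cf.\ Remark \ref{qs_Weyl_bijection}); the paper's correct statement is the bijection $W(G,A) \cong [W(G,S)/W(M,S)]^\sigma$ of Lemma \ref{Weyl_bijection}, which quotients by $W(M,S)$ exactly to absorb the difference.

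The paper circumvents all of this. For (I) it never asserts $\widetilde{W}_K = W(G,S)$; instead it identifies $\widetilde{W}_K$ with the Weyl group of the reductive quotient $\overline{\mathcal G}^{\circ,\rm red}_{\mathbf{a}_0}$ and $\widetilde{W}_K^\sigma$ with the Weyl group of the special fiber over $k_F$, then uses that $\Phi_{v_F} = \Phi(G,A)$ (because $v_F$ is special \emph{over $F$}) to conclude by a counting argument. For (II) it works with the $\sigma$-equivariant \emph{non-split} exact sequence and the bijection $\widetilde{W}_M\backslash\widetilde{W} \cong W(M,S)\backslash W(G,S)$, plus Lemma \ref{Weyl_bijection}. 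For (III) it reduces to the statement $M_{\rm sc}(F) = I_{M_{\rm sc}}(F)$, which follows from anisotropy of $M_{\rm sc}$ over $F$. Your own ideas for (III) --- using faithfulness of $W(G,A)$ on $A$ together with $A \subseteq Z(M)$, and the injectivity of $T(F)/T(F)_1 \to M(F)/M(F)_1$ via Lemma \ref{parahoric_cap_Levi} --- are sound in themselves, but they are deployed inside the contested semidirect product framework, so the proof as written does not go through.
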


The Kottwitz homomorphism gives an isomorphism
$$
\kappa_M : \Omega_M^\sigma ~ \widetilde{\rightarrow} ~ X^*(Z(\widehat{M}))^\sigma_I
$$
(cf. \cite{Ko97}, 7.7).  Putting this together with the lemma we get Theorem \ref{Cart_decomp_stmt}.

The proof of Lemma \ref{Cartan_lemma} will occupy the next four sections.

\section{Some ingredients about parahoric subgroups}

\subsection{Parahoric subgroups of $F$-Levi subgroups}

As before, let $A$ denote a maximal $F$-split torus in $G$, let $S \supseteq A$ be a maximal $L$-split torus which is defined over $F$, and let $T = C_G(S)$, a maximal torus of $G$ which is defined over $F$.

Let $A_M$ denote any subtorus of $A$, and let $M = C_G(A_M)$.  Thus $M$ is a semi-standard $F$-Levi subgroup of $G$.  The extended buildings $\mathcal B(M(L))$ and $\mathcal B(G(L))$ share an apartment (which corresponds to $S$), but the affine hyperplanes in the apartment $\mathcal A^M_L$ for $M(L)$ form a subset of those in the apartment $\mathcal A_L$ for $G(L)$.  Hence any facet ${\bf a}_J$ in $\mathcal A_L$ is contained in a unique facet in $\mathcal A^M_L$, which we will denote by ${\bf a}^M_J$.

The following result was proved in \cite{H08} in the special case where $G$ splits over $L$.

\begin{lemma} \label{parahoric_cap_Levi} Suppose $J(L) \subset G(L)$ is the parahoric subgroup corresponding to a facet 
${\bf a}_J \subset \mathcal A_L$.  Then $J(L) \cap M$ is a parahoric subgroup of $M(L)$, and corresponds to the facet ${\bf a}^M_J \subset \mathcal A^M_L$.
\end{lemma}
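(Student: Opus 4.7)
The plan is to identify $J(L) \cap M$ with the parahoric of $M(L)$ attached to ${\bf a}^M_J$ by combining the explicit description of parahoric subgroups via affine root subgroups with the Iwahori-style factorization of $J(L)$ relative to a parabolic of $G_L$ with Levi $M$. The point of departure from \cite{H08} is that $G_L$ need only be quasi-split (by Steinberg's theorem), not split; consequently $T = C_G(S)$ is a maximal $L$-torus but the relative root system $\Phi := \Phi(G_L, S)$ may be non-reduced, so one must work with affine root subgroups in the quasi-split (rather than the split) Bruhat-Tits setting. Nevertheless, the apartments $\mathcal A_L$ and $\mathcal A^M_L$ share the same underlying affine space, the walls of $\mathcal A^M_L$ are precisely those walls of $\mathcal A_L$ coming from roots $\alpha \in \Phi(M,S)$ (the roots vanishing on $A_M$), and by construction ${\bf a}^M_J$ is the unique $M$-facet of $\mathcal A^M_L$ containing ${\bf a}_J$.

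First I would fix a parabolic subgroup $P$ of $G_L$ with Levi $M$ and unipotent radical $U$, and let $\bar P = M \ltimes \bar U$ be its opposite. From Bruhat-Tits theory (\cite{BT2}, \S4--5) one obtains the Iwahori-type factorization
$$
J(L) \;=\; (J(L) \cap \bar U(L)) \cdot (J(L) \cap M(L)) \cdot (J(L) \cap U(L)),
$$
in which the outer factors are generated by the affine root subgroups $U_a$ with gradient in $\Phi(\bar U, S)$, $\Phi(U, S)$ respectively, and the middle factor is generated by $T(L)_1$ together with those $U_a$ having gradient in $\Phi(M, S)$. By the uniqueness of the decomposition in the big cell $\bar U \cdot M \cdot U \subset G$ (an open immersion of varieties), an element of $J(L)$ lies in $M(L)$ if and only if its outer factors are trivial, so $J(L) \cap M(L)$ coincides with the middle factor.

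Next I would match this middle factor with the parahoric of $M(L)$ at ${\bf a}^M_J$. The latter is generated by $T(L)_1$ together with the affine root subgroups $U_a$ for $a = \alpha + n$ with $\alpha \in \Phi(M, S)$ and $a|_{{\bf a}^M_J} \ge 0$. The decisive observation is that for $\alpha \in \Phi(M, S)$ the affine root $a$ is constant in sign on the $M$-facet ${\bf a}^M_J$ (since the walls of $\mathcal A^M_L$ are exactly the zero sets of such $a$), so the condition $a|_{{\bf a}_J} \ge 0$ is equivalent to $a|_{{\bf a}^M_J} \ge 0$. Hence the two sets of generators coincide and the two groups are equal.

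The main technical obstacle is the quasi-split Iwahori factorization: one must verify that the formalism of divisible and non-divisible affine root subgroups $U_a$ from \cite{BT2} composes to give $J(L)$ in exactly this factored form, working with the possibly non-reduced root system $\Phi$. The split case in \cite{H08} rests on the same structural apparatus and the extension to quasi-split groups is essentially formal once one appeals to Bruhat-Tits's valuation of the quasi-split root datum.
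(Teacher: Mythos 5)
Your approach is genuinely different from the paper's. The paper starts from the main theorem of \cite{HR}, $J(L) = {\rm Fix}({\bf a}_J)\cap G(L)_1$ (the same for $M$), reduces the lemma to the set-theoretic identity
${\rm Fix}({\bf a}_J)\cap G(L)_1\cap M(L) = {\rm Fix}({\bf a}^M_J)\cap M(L)_1$,
and then proves the hard inclusion by a direct argument with the Iwahori--Weyl group of $M$: any $m\in M(L)\cap G(L)_1$ fixing a point of ${\bf a}^M_J$ is first moved into $N_M(S)(L)$ via the Iwahori decomposition $I_M\backslash M(L)/I_M\cong N_M(S)(L)/T(L)_1$, written as $m=tn$, and then an averaging argument over the cyclic group generated by the image of $n$ in $W(M,S)$ is used to show $\kappa_T(t)\in Q^\vee(\Sigma_M)$, which puts $m$ in $T(L)_1\,N_{M_{\rm sc}}(S^M_{\rm sc})(L)\subset M(L)_1$. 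Your route through the big cell and a Levi-adapted Iwahori factorization bypasses the Kottwitz homomorphism and the Iwahori--Weyl group entirely.

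The concern with your route is that the crucial claim --- that in the factorization $J(L)=(J(L)\cap\bar U)\cdot(J(L)\cap M)\cdot(J(L)\cap U)$ the middle factor is \emph{generated by $T(L)_1$ and the $U_a$'s with gradient in $\Phi(M,S)$} --- is not a direct quotation from \cite{BT2}; it is essentially the assertion of the lemma restated in different language, and it is exactly the point where the distinction between the full stabilizer and the connected ($\mathcal O_L$-points of $\mathcal G^\circ$) parahoric matters. BT2 5.2.4 gives $P(L)\cap J(L)=(M(L)\cap J(L))\cdot(U(L)\cap J(L))$ (this is what the paper uses for the Iwasawa decomposition), but it does \emph{not} by itself identify $M(L)\cap J(L)$ with the parahoric $\mathcal M^\circ_{{\bf a}^M_J}(\mathcal O_L)$ of $M$; the torus part being precisely $T(L)_1=\mathcal T^\circ(\mathcal O_L)$, rather than some larger subgroup of $T(L)\cap{\rm Fix}({\bf a}_J)$, is the crux. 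One can make your argument rigorous by appealing to Bruhat--Tits's product decomposition of $\mathcal G^\circ_\Omega(\mathcal O_L)$ as $T(L)_1$ times an ordered product of the $\mathcal U_a(\mathcal O_L)$'s (where the ordering can be taken Levi-adapted), combined with the uniqueness of big-cell decompositions; and your sign observation --- that an affine root with gradient in $\Phi(M,S)$ has constant sign on ${\bf a}^M_J$, so $a|_{{\bf a}_J}\ge 0\Leftrightarrow a|_{{\bf a}^M_J}\ge 0$ --- is correct and does the rest. But as written, you have asserted the heart of the lemma as if it were a black box from the literature, which is the genuine gap. The paper's route via \cite{HR} deliberately avoids having to rework the concave-function formalism for the connected scheme and replaces it with the single clean identity $J(L)={\rm Fix}({\bf a}_J)\cap G(L)_1$.
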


\begin{proof}
The main result result of \cite{HR} is the following characterization of parahoric subgroups:
$$
J(L) = {\rm Fix}({\bf a}_J)  \cap G(L)_1.
$$
Applying this for the groups $M$ and $G$, we see we only need to show
$$
{\rm Fix}({\bf a}_J) \cap G(L)_1 \cap M(L) = {\rm Fix}({\bf a}^M_J) \cap M(L)_1.
$$
The functoriality of the Kottwitz homomorphisms shows $M(L)_1 \subset G(L)_1$, and then the inclusion "$\supseteq$" is evident.  Let ${\bf a}^M$  denote an alcove in $\mathcal A^M_L$ whose closure contains ${\bf a}^M_J$.  Let $I_M$ denote the Iwahori subgroup of $M(L)$ corresponding to 
${\bf a}^M$. 

Let $S^M_{\rm sc}$ resp. $T^M_{\rm sc}$ denote the pull-back of the torus $(S \cap M_{\rm der})^\circ$ resp. $T \cap M_{\rm der}$ along the homomorphism $M_{\rm sc} \rightarrow M_{\rm der}$.  To prove the inclusion ``$\subseteq$'' it is enough to prove the following claim, since $N_{M_{\rm sc}}(S^M_{\rm sc})(L)$ and $I_M$ belong to $M(L)_1$.  Here and in what follows, we abuse notation slightly by writing $N_{M_{\rm sc}}(S^M_{\rm sc})(L)$ where we really mean its image in $M(L)$.
\smallskip

\noindent {\bf Claim:}  Any element $m \in M(L) \cap G(L)_1$ which fixes a point in ${\bf a}^M_J$ belongs to $$I_M \, N_{M_{\rm sc}}(S^M_{\rm sc})(L) \,I_M$$ and fixes every point of ${\bf a}^M_J$.

\smallskip

\noindent {\em Proof:}  Recall the decomposition
\begin{equation} \label{HR_decomp_I_M}
I_M \backslash M(L) / I_M  \cong N_M(S)(L)/T(L)_1
\end{equation}
of \cite{HR}, Prop.~8.  Using this we may assume $m \in N_M(S)(L)$.

 We will show that for such an element $m$ which fixes a point of ${\bf a}^M_J$ we have $m \in T(L)_1 \, N_{M_{\rm sc}}(S^M_{\rm sc})(L)$, which will prove the first statement of the claim.  It will also prove the second statement, since then $m$ determines a type-preserving automorphism of the apartment $\mathcal A^M_L$, hence fixes ${\bf a}^M_J$ if it fixes any of its points.  

Choose a special vertex ${\bf a}^M_0$ contained in the closure of ${\bf a}^M$, and let $K_0$ denote the corresponding special maximal parahoric subgroup of $M(L)$.  We may write $m = tn$, where $t \in T(L)$ and $n \in N_M(S)(L) \cap K_0$ (cf. \cite{HR}, Prop. 13).  Define $\nu \in X_*(T)_I$ to be $\kappa_T(t)$ and $w \in W(M,S)$ to be the image of $n$ under the projection $N_M(S)(L) \rightarrow W(M,S)$.  Thus $m$ maps to the element $t_\nu \, w \in X_*(T)_I \rtimes W(M,S) \cong \widetilde{W}_M$, the Iwahori-Weyl group for $M$.  

Let $\Sigma^\vee$ denote the coroots associated to the unique reduced root system $\Sigma$ such that the set of affine roots $\Phi_{\rm af}(G(L), S)$ on $\mathcal A_L$ are given by $\Phi_{\rm af} = \{ \alpha + k ~ | ~ \alpha \in \Sigma, \,  k \in \mathbb Z \}$, cf. \cite{HR}.   Let $\Sigma_M^\vee$ denote the coroots for the corresponding root system $\Sigma_M$ for $\Phi_{\rm af}(M(L),S)$ on $\mathcal A^M_L$.  Let $Q^\vee(\Sigma)$ resp. $Q^\vee(\Sigma_M)$ denote the lattice spanned by $\Sigma^\vee$ resp. $\Sigma^\vee_M$.  Recall from \cite{HR} that we have identifications $Q^\vee(\Sigma) \cong X_*(T_{\rm sc})_I$ and $Q^\vee(\Sigma_M) \cong X_*(T^M_{\rm sc})_I$.  Also, we have $\Phi_{\rm af}(M(L),S) \subseteq \Phi_{\rm af}(G(L),S)$, and therefore $Q^\vee(\Sigma_M )\subseteq Q^\vee(\Sigma)$. 

Clearly $w$ is the image of an element from $N_{M_{\rm sc}}(S^M_{\rm sc})(L) \cap K_{0}$, since the latter also surjects onto $W(M,S)$.  Thus we need only show that $\nu \in Q^\vee(\Sigma_M)$, since $Q^\vee(\Sigma_M)$ is also in the image of $N_{M_{\rm sc}}(S^M_{\rm sc})(L) \rightarrow \widetilde{W}_M$.  

First, we will prove that $\nu \in Q^\vee(\Sigma)$.  Indeed, 
by construction $t \in G(L)_1$, and using 
$$X_*(T)_I/X_*(T_{\rm sc})_I \cong X^*(Z(\widehat{G}))_I$$ 
(cf. \cite{HR}) we see that $\nu \in X_*(T_{\rm sc})_I = Q^\vee(\Sigma)$.

Next, let $r$ denote the order of $w \in W(M,S)$.  The element $m^r$ maps to $(t_\nu w)^r \in \widetilde{W}_M$, which is the translation by the element $\mu := \sum_{i=0}^{r-1}w^i\nu \in Q^\vee(\Sigma)$.  But as this translation fixes a point of ${\bf a}^M_J$, we must have $\mu = 0$.  Since $w^i\nu \equiv \nu$ modulo $Q^\vee(\Sigma_M)$, it follows that 
$$
\nu \in Q^\vee(\Sigma_M)_{\mathbb Q} \cap Q^\vee(\Sigma) = Q^\vee(\Sigma_M).
$$
This completes the proof of the claim, and thus the lemma.
\end{proof}

\subsection{Parahoric subgroups of minimal $F$-Levi subgroups}

Now we return to the usual notation, where $M := C_G(A)$ is a minimal $F$-Levi subgroup of $G$.  In this case $M_{\rm ad}$ is anisotropic over $F$ and the semi-simple building $\mathcal B(M_{\rm ad}(F)) = \mathcal B(M_{\rm ad}(L))^\sigma$ is a singleton.  The apartment $(\mathcal A^M_L)^\sigma$ is the empty apartment (no affine hyperplanes).  Therefore, $M(F)$ has only one parahoric subgroup.

\begin{lemma} \label{para_cap_min_Levi}
Let $J$ be any parahoric subgroup of $G(L)$ corresponding to a $\sigma$-invariant facet ${\bf a}_J$ in $\mathcal A_L$.  Then $J(L) \cap M(F) = M(F)_1$.
\end{lemma}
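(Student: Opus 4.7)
The plan is to deduce the statement directly from Lemma~\ref{parahoric_cap_Levi} together with the observation, recorded in the paragraph preceding the statement, that $M(F)$ has exactly one parahoric subgroup.

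Concretely, I would first rewrite
$$
J(L) \cap M(F) \;=\; \bigl( J(L) \cap M(L) \bigr)^{\sigma},
$$
which is legitimate because both $J(L)$ (since the facet ${\bf a}_J$ is $\sigma$-stable) and $M(L)$ (since $M$ is defined over $F$) are $\sigma$-stable subgroups of $G(L)$, and $M(F) = M(L)^\sigma$. Next, I would apply Lemma~\ref{parahoric_cap_Levi} to identify $J(L) \cap M(L)$ with the parahoric subgroup $J_M(L) \subset M(L)$ corresponding to the unique facet ${\bf a}^M_J \subset \mathcal{A}^M_L$ containing ${\bf a}_J$. By that uniqueness, the $\sigma$-invariance of ${\bf a}_J$ transfers to ${\bf a}^M_J$, and hence $J_M(L)$ is $\sigma$-stable. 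By the general principle recalled in the Buildings notation subsection (\cite{BT2}, 5.2), $J_M(L)^\sigma$ is then a parahoric subgroup of $M(F)$.

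At that point the argument closes immediately: since $M(F)$ has only one parahoric subgroup, namely $M(F)_1$, we must have
$$
J(L) \cap M(F) \;=\; J_M(L)^{\sigma} \;=\; M(F)_1.
$$
I do not anticipate a serious obstacle, since the substantive content is already packaged in Lemma~\ref{parahoric_cap_Levi}; the remaining steps are formal consequences of $\sigma$-equivariance and of the fact, peculiar to the minimal $F$-Levi $M$, that $(\mathcal{A}^M_L)^\sigma$ carries no affine hyperplanes and so forces uniqueness of the parahoric in $M(F)$.
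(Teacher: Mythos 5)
Your reduction steps are correct up to a point: the equalities $J(L)\cap M(F) = (J(L)\cap M(L))^\sigma$, the identification of $J(L)\cap M(L)$ with the parahoric $J_M(L)$ of $M(L)$ via Lemma~\ref{parahoric_cap_Levi}, the $\sigma$-stability of ${\bf a}^M_J$ by uniqueness, and the conclusion that $J_M(L)^\sigma$ is \emph{a} parahoric subgroup of $M(F)$ are all sound. But the final step, ``since $M(F)$ has only one parahoric subgroup, namely $M(F)_1$,'' conceals the real content of the lemma and makes the argument circular in the paper's intended sense.

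Here is the issue. In the body of the paper, $M(F)_1$ means $M(F)\cap M(L)_1 = M(F)\cap\ker(\kappa_M)$ (the Kottwitz-homomorphism definition), and the parenthetical in the introduction explicitly says that the \emph{agreement} of this group with the unique parahoric subgroup of $M(F)$ is a consequence of Lemmas~\ref{parahoric_cap_Levi} and~\ref{para_cap_min_Levi} --- i.e., of the very lemma you are proving. What the paragraph preceding the lemma establishes is only that $M(F)$ has a unique parahoric; it does not show that this unique parahoric is all of $M(F)\cap\ker(\kappa_M)$. By the characterization from \cite{HR}, the unique parahoric is $\mathrm{Fix}({\bf a}^M_J)\cap M(F)_1$, so your argument in fact only re-derives the inclusion $J(L)\cap M(F)\subseteq M(F)_1$; the substantive inclusion $M(F)_1\subseteq \mathrm{Fix}({\bf a}^M_J)$ (equivalently, that every element of $\ker(\kappa_M)\cap M(F)$ fixes ${\bf a}^M_J$ pointwise) is left unproved. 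That is precisely what the paper supplies: it takes $m\in M(F)\cap\ker(\kappa_M)$, notes that $m$ acts trivially on the apartment $\mathcal A_L^\sigma$ and so fixes a point of ${\bf a}_J$, and then applies the Claim from the proof of Lemma~\ref{parahoric_cap_Levi} (with $M=G$) to upgrade this to $m$ fixing ${\bf a}_J$ pointwise, giving $m\in J(L)$. Your proof needs this step, or something equivalent, to close the gap between ``$J(L)\cap M(F)$ is the unique parahoric of $M(F)$'' and ``$J(L)\cap M(F) = M(F)\cap\ker(\kappa_M)$''.
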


\begin{proof}
By Lemma \ref{parahoric_cap_Levi}, the inclusion "$\subseteq$" is clear.  Let $m \in M(F)_1$.  Since $m$ acts trivially on the apartment $\mathcal A_L^\sigma$ in the building $\mathcal B(G(F)) = \mathcal B(G(L))^\sigma$, it fixes a point of the $\sigma$-invariant facet ${\bf a}_J$ (e.g. its barycenter).  But then since $m \in G(F)_1$, by the Claim in the proof of Lemma \ref{parahoric_cap_Levi} (taking $M =G$), $m$ fixes every point in ${\bf a}_J$.  Clearly then $m \in {\rm Fix}({\bf a}_J) \cap G(L)_1 \cap M(F) = J(L) \cap M(F)$.
\end{proof}

\begin{lemma} \label{inject}
Let $K(L)$ denote the parahoric subgroup of $G(L)$ whose $\sigma$-fixed subgroup $K = K(L)^\sigma$ is the special maximal compact subgroup of $G(F)$ we fixed earlier.  Then
$$
K \cap N_G(S)(L) \cap M(F) = T(F)_1.
$$
\end{lemma}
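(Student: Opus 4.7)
The inclusion $T(F)_1 \subseteq K \cap N_G(S)(L) \cap M(F)$ is immediate: $T \subseteq C_G(S) \subseteq N_G(S)$ and $T \subseteq M$ give the latter two memberships, while $T(L)_1 = \ker(\kappa_T)$ acts trivially on $\mathcal{A}_L$, hence lies in every parahoric, in particular in $K(L)$; intersecting with $G(F)$ puts $T(F)_1$ inside $K$.

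For the reverse inclusion, take $n \in K \cap N_G(S)(L) \cap M(F)$. Since $S \subseteq M$ we have $N_G(S) \cap M = N_M(S)$, so $n \in N_M(S)(F)$; Lemma \ref{para_cap_min_Levi} then gives $n \in M(F)_1$. The goal is to show that the image $[n] \in \widetilde{W}_M = N_M(S)(L)/T(L)_1$ is trivial, for then $n \in T(L)_1 \cap M(F) = T(F)_1$. The class $[n]$ lies in $W_{M,\rm aff}$ (because $n \in M(F)_1 \subseteq \ker \kappa_M$, and $W_{M,\rm aff}$ is the kernel of $\kappa_M$ on $\widetilde{W}_M$), is $\sigma$-invariant (because $n$ is $F$-rational), and stabilizes $v_F$ (because $n \in K(L)$ fixes ${\bf a}_0$ pointwise). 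Furthermore, the Claim from the proof of Lemma \ref{parahoric_cap_Levi}, applied to $n \in N_M(S)(L) \cap G(L)_1$ fixing a point of ${\bf a}^M_0 \supseteq {\bf a}_0$, yields a factorization $n = t \cdot \tilde n$ with $t \in T(L)_1$ and $\tilde n$ the image in $M(L)$ of an element of $N_{M_{\rm sc}}(S^M_{\rm sc})(L)$; both factors then lie in $K(L)$.

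The remaining task is to conclude $[n] = 1$. Since $v_F$ is special, the Bruhat-Tits splitting $\widetilde{W} = X_*(T)_I \rtimes W_0$ at $v_F$ realizes the stabilizer of $v_F$ in $\widetilde{W}$ as the finite Weyl group $W_0 = W(G,S)(L)$, and intersecting with $W_{M,\rm aff}$ picks out the Weyl part, giving $W_0 \cap W_{M,\rm aff} = W(M,S)(L)$. Thus $[n] \in W(M,S)(L)$ and is $\sigma$-invariant for the action induced by the splitting. The crucial input is then that $A$ is central in the minimal $F$-Levi $M$, so $W(M,A) = N_M(A)(F)/M(F) = 1$, and under the $\sigma$-action induced by the special-vertex splitting the invariants $W(M,S)(L)^\sigma$ coincide with $W(M,A)$, forcing $[n] = 1$. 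The main obstacle is this last identification: the $\sigma$-action on $W(M,S)(L)$ inherited from the splitting at $v_F$ differs from the naive Galois action on the Weyl group (it is twisted by a cocycle coming from the special-vertex choice, as the quotient is by $T(L)_1$ rather than $T(L)$), and checking carefully that its fixed points are exactly $W(M,A)$ is the technical heart of the argument, and is the analogue for $M$ of part (I) of Lemma \ref{Cartan_lemma}.
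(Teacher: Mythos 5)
Your reduction is fine as far as it goes: the inclusion $T(F)_1 \subseteq K \cap N_G(S)(L) \cap M(F)$, the observation $n \in N_M(S)(F) \cap M(F)_1$, and the conclusion that $[n]$ lies in $W_{M,\rm aff}^\sigma$ are all correct. But the final step is a genuine gap, and you say so yourself: the identification of $W(M,S)(L)^\sigma$ (for the twisted $\sigma$-action coming from a special-vertex splitting of $\widetilde{W}_M$) with $W(M,A)=1$ is asserted, not proved. Worse, there is a real circularity risk: you describe it as ``the analogue for $M$ of part (I) of Lemma \ref{Cartan_lemma},'' but that part is Lemma \ref{taut_hom}, whose proof uses Lemma \ref{inject}. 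Specialized to the minimal Levi $M$ in place of $G$, the present Lemma reads $M(F)_1 \cap N_M(S)(L) = T(F)_1$, which is exactly what you have reduced the problem to. Your route would feed the statement into its own proof.

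The input you actually need is $W_{M,\rm aff}^\sigma = 1$. That is Lemma \ref{W_M_aff_sigma} of this paper (it appears later but is logically independent of Lemma \ref{inject}); it is proved by identifying $W_{M,\rm aff}^\sigma$ with $I_{M_{\rm sc}}(F)\backslash M_{\rm sc}(F)/I_{M_{\rm sc}}(F)$ and noting $M_{\rm sc}(F) = M_{\rm sc}(F)_1 = I_{M_{\rm sc}}(F)$ because $M_{\rm sc}$ is anisotropic over $F$. Had you established that, your argument would close immediately upon getting $[n] \in W_{M,\rm aff}^\sigma$, and the special-vertex splitting and the factorization via the Claim would be superfluous.

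The paper's own proof is much shorter and sidesteps the Iwahori--Weyl group altogether: by Lemma \ref{para_cap_min_Levi}, $K \cap M(F) = M(F)_1 = I \cap M(F)$ for any Iwahori $I$ corresponding to a $\sigma$-stable alcove in $\mathcal A_L$, so $K \cap N_G(S)(L) \cap M(F) = I \cap N_G(S)(L) \cap M(F)$; the Iwahori-level identity $I(L) \cap N_G(S)(L) = T(L)_1$ (\cite{HR}, Lemma 6) then gives $T(L)_1 \cap M(F) = T(F)_1$.
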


\begin{proof}
Fix an Iwahori subgroup $I \subset G(L)$ corresponding to a $\sigma$-invariant alcove in $\mathcal A_L$.  Note that by Lemma \ref{para_cap_min_Levi}, we have
$K \cap M(F) = I \cap  M(F)$ and hence
$$
K \cap N_G(S)(L) \cap M(F) = I \cap N_G(S)(L) \cap M(F).
$$
By \cite{HR}, Lemma 6, the right hand side is $T(L)_1 \cap M(F) = T(F)_1$.  
\end{proof}

\section{The isomorphism $\widetilde{W}_K^\sigma \cong W(G,A)$}

By \cite{HR}, Remark 9, any element of $\widetilde{W}_K^\sigma$ is represented by an element of $N_G(S)(F)$.  Let $x \in N_G(S)(F)$.  Then $xSx^{-1} = S$ contains $xAx^{-1}$ and $A$, which being maximal $F$-split tori in $S$, must coincide.  Thus, there is a tautological homomorphism
$$
N_G(S)(F) \rightarrow N_G(A)(F).
$$
By Lemma \ref{inject}, this factors to give an injective homomorphism
$$
\widetilde{W}_K^\sigma \hookrightarrow W(G,A).
$$
The next statement furnishes the proof of Lemma \ref{Cartan_lemma}, (I).

\begin{lemma} \label{taut_hom}
The homomorphism $\widetilde{W}_K^\sigma \rightarrow W(G,A)$ is an isomorphism.  This allows us to regard $W(G,A)$ as a subgroup of $\widetilde{W}^\sigma$.
\end{lemma}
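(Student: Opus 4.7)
Since the map $\widetilde{W}_K^\sigma \hookrightarrow W(G,A)$ has already been established as injective, the only content of Lemma \ref{taut_hom} is surjectivity: given $w \in W(G,A)$, I must produce a representative $n \in K \cap N_G(S)(F)$ whose image in $W(G,A)$ is $w$.

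The first step of the plan is to lift $w$ merely to $N_G(S)(F)$, ignoring for the moment whether it lies in $K$. Pick any lift $n_0 \in N_G(A)(F)$; then $n_0 S n_0^{-1}$ is again an $F$-rational, maximal $L$-split torus of $M = C_G(A)$, because $n_0$ normalizes $A$ and hence normalizes $M = C_G(A)$. Since $M_{\rm der}$ is $F$-anisotropic (its semisimple Bruhat-Tits building $\mathcal{B}(M_{\rm der}(F))$ is a single point), standard Bruhat-Tits rational-conjugacy results imply that all $F$-rational maximal $L$-split tori of $M$ are conjugate under $M(F)$. This yields $m \in M(F)$ with $n_1 := m n_0 \in N_G(S)(F)$, and $n_1$ still maps to $w$.

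The second step uses that $v_F$ is special to replace $n_1$ by an element of $K$. Under the decomposition $\mathcal A = \mathcal A_{\rm ad} \times V_{G(F)}$, the element $n_1$ acts as an affine transformation with linear part equal to $w$ on $\mathcal A_{\rm ad}$ and trivial on $V_{G(F)}$. Since $v_F$ is special, $w$ fixes $v_F$, so $n_1$ differs from an element fixing $\mathbf a_0^\sigma = \{v_F\} \times V_{G(F)}$ pointwise by some translation $t \in X_*(A) \otimes \mathbb R$. Because $T(F)$ acts on $\mathcal A_L$ by translations determined through $\kappa_T$, a suitable $\tau \in T(F)$ cancels $t$, and $n := n_1 \tau$ fixes $\mathbf a_0^\sigma$ pointwise. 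The $G(F)$-equivariant bijection between $\sigma$-stable facets of $\mathcal A_L$ and facets of $\mathcal A$ then forces $n$ to fix the $\sigma$-stable facet $\mathbf a_0 \subset \mathcal A_L$ pointwise, so by the HR characterization $K(L) = \mathrm{Fix}(\mathbf a_0) \cap G(L)_1$, the remaining obligation is to arrange $\kappa_G(n) = 0$. This is accomplished by further refining the choice of $\tau$ within its coset modulo $T(F)_1$.

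The main obstacle I anticipate is the coordination of the two successive $\tau$-adjustments — the translation cancellation and the Kottwitz cancellation — both of which are controlled by the single homomorphism $\kappa_T \colon T(F) \to X^*(Z(\widehat T))_I^\sigma$ together with its projection to the translation lattice of $\mathcal A_L$. Verifying that both conditions on $\tau$ can be imposed simultaneously relies on the surjectivity of $\kappa_T$, on the $\sigma$-equivariant decomposition $\widetilde W = W_{\rm aff} \rtimes \Omega_G$, and on the fact that $\Omega_G$ stabilizes $\mathbf a_0$; once the bookkeeping is in place, the class of $n$ in $\widetilde W_K^\sigma$ maps to $w$, completing surjectivity.
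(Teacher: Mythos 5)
Your strategy is genuinely different from the paper's. The paper does not attempt to construct a lift of a given $w \in W(G,A)$; instead it shows the injection $\widetilde{W}_K^\sigma \hookrightarrow W(G,A)$ is an isomorphism by a counting argument: by \cite{HR}, Prop.~12, $\widetilde{W}_K$ is the Weyl group of the reductive quotient $\overline{\mathcal G}^{\circ,\rm red}_{\mathbf a_0}$ of the special fiber of the Bruhat--Tits group scheme; since $k_F$ is finite, the $k_F$-form $\overline{\mathcal G}^{\circ,\rm red}_{v_F}$ is quasi-split and so $\widetilde{W}_K^\sigma$ is its relative Weyl group; and by \cite{Tits}, 3.5.1 and the speciality of $v_F$, the root system of $\overline{\mathcal G}^{\circ,\rm red}_{v_F}$ is $\Phi(G,A)$, whence both groups are finite of the same order. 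No explicit construction of a preimage is ever needed.

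Your constructive route has a genuine gap at the crucial final step. Having chosen $\tau \in T(F)$ so that $n = n_1\tau$ fixes $\mathbf a_0$ pointwise, you still must show $\kappa_G(n) = 0$ to place $n$ in $K(L) = \mathrm{Fix}(\mathbf a_0) \cap G(L)_1$, since elements of $N_G(S)(F)$ fixing $v_F$ a priori lie only in $\widetilde K = G(F)^1 \cap \mathrm{Fix}(v_F)$, which is strictly larger than $K$ whenever $\Lambda_{M,\rm tor} \neq 0$. Your proposed fix --- ``refining the choice of $\tau$ within its coset modulo $T(F)_1$'' --- cannot work as stated: $T(F)_1 \subseteq G(F)_1 = \ker\kappa_G$, so replacing $\tau$ by $\tau\tau_0$ with $\tau_0 \in T(F)_1$ leaves $\kappa_G(n)$ unchanged. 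The degree of freedom you actually have after imposing the translation condition is $\tau$ modulo $T(F)^1$ (the kernel of the translation action), and $T(F)^1/T(F)_1$ is a finite torsion group; whether this torsion can always be used to drive $\kappa_G(n)$ to zero is precisely the nontrivial content of the surjectivity claim, and it is not addressed. In effect the ``bookkeeping'' you defer is the entire theorem. (A secondary point: the $M(F)$-conjugacy of $F$-rational maximal $L$-split tori that you invoke in the first step is true but requires a specific citation; the vague appeal to ``standard Bruhat--Tits rational-conjugacy results'' is not enough to carry the argument in a paper.) The paper's counting argument neatly sidesteps all of this, at the cost of invoking the structure theory of the reductive quotient of the special fiber.
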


\begin{proof}
It is enough to prove the domain and codomain have the same order.  Let $k_L$ denote the residue field of $\mathcal O_L$, which can be identified with an algebraic closure of $k_F$.  Consider the special fiber $\overline{\mathcal G}^\circ_{\bf a_0} = {\mathcal G}^\circ_{{\bf a}_0} \times_{\mathcal O_L} k_L$ of the Bruhat-Tits group scheme ${\mathcal G}^\circ_{{\bf a}_0}$ over $\mathcal O_L$ which is associated to the facet ${\bf a}_0$ in the building $\mathcal B(G(L))$.  Let $\overline{\mathcal G}^{\circ, \rm red}_{{\bf a}_0}$ denote the maximal reductive quotient of $\overline{\mathcal G}^\circ_{{\bf a}_0}$.  By \cite{HR}, Prop. 12, $\widetilde{W}_K$ is the Weyl group of $\overline{\mathcal G}^{\circ, \rm red}_{\bf a_0}$.  The group $\overline{\mathcal G}^{\circ, \rm red}_{{\bf a}_0}$ is defined over $k_F$, and in fact we have 
$\overline{\mathcal G}^{\circ,\rm red}_{{\bf a}_0}  = 
\overline{\mathcal G}^{\circ, \rm red}_{v_F} 
\times_{k_F} k_L$, where $\overline{\mathcal G}^\circ_{v_F}$ is the special fiber of $\mathcal G^\circ_{v_F}$ (cf. \cite{Land}, Cor. 10.10).  Since $k_F$ is finite, $\overline{\mathcal G}^{\circ, \rm red}_{v_F}$ is automatically quasi-split over $k_F$, and it follows that $\widetilde{W}_K^\sigma$ is the Weyl group of $\overline{\mathcal G}^{\circ, \rm red}_{v_F}$ (this is well-known, but one can also use the argument which yields Remark \ref{qs_Weyl_bijection} below).  

On the other hand, by \cite{Tits}, 3.5.1, the root system of $\overline{\mathcal G}^{\circ, \rm red}_{v_F}$ is $\Phi_{v_F}$, the root system consisting of the vector parts of the affine roots for $A$ which vanish on $v_F$ (loc.~cit. 1.9).  Because $v_F$ is special, $\Phi_{v_F} = \Phi(G,A)$, the relative root system.  Thus the Weyl group of $\overline{\mathcal G}^{\circ, \rm red}_{v_F}$ is isomorphic to $W(G,A)$. 

These remarks imply that $\widetilde{W}_K^\sigma$ and $W(G,A)$ are abstractly isomorphic groups and in particular they have the same order.  

\end{proof}

\section{A decomposition of the Iwahori Weyl group}

The goal here is to prove Lemma \ref{Cartan_lemma}, (II).

\subsection{A lemma on finite Weyl groups}

Let $w \in W(G,A)$ and choose a representative $g \in N_G(A)(F)$ for $w$; write $[g] = w$.  The tori $gSg^{-1}$ and $S$ are both maximal $L$-split tori in $M$, hence there exists $m \in M(L)$ such that $mgSg^{-1}m^{-1} = S$.  We claim that the map
\begin{align*}
W(G,A) &\rightarrow W(G,S)/W(M,S) \\
w &\mapsto [mg]\cdot W(M,S)
\end{align*}
is well-defined and injective.  Indeed, suppose $g_0 \in N_G(A)(F)$ represents an element $w_0 \in W(G,A)$ and that $m_0 \in M(L)$ satisfies $m_0g_0 S g_0^{-1}m_0^{-1} = S$.  To show the map is well-defined, we suppose $w= w_0$ and we show that $(mg)^{-1}m_0g_0 \in N_M(S)$.  It will suffice to show $(mg)^{-1}m_0g_0$ belongs to $M(L)$.  Since $g$ normalizes $M = C_G(A)$ and $g^{-1}g_0 \in M$, this is obvious.   To show the map is injective we suppose $[mg]W(M,S) =  [m_0g_0] W(M,S)$, that is, $(mg)^{-1}m_0g_0 \in N_M(S)$.  Arguing as before, we deduce that $g^{-1}g_0 \in M$.  This shows that $w = w_0$ and so we get the injectivity.

\begin{Remark} \label{alt_desc} Here is another way to describe the map.  For an element $w \in W(G,A)$, using Lemma \ref{taut_hom} choose an element $x \in N_G(S)(F) \cap K$ whose image in $\widetilde{W}^\sigma_K$ maps to $w$ under the isomorphism $\widetilde{W}_K^\sigma ~ \widetilde{\rightarrow}  ~ W(G,A)$.  Then the map sends $w$ to the coset $[x] W(M,S)$. 
\end{Remark}

\begin{lemma} \label{Weyl_bijection}
The above map induces a bijection
$$
W(G,A) ~ \widetilde{\rightarrow} ~ [W(G,S)/W(M,S)]^\sigma.
$$
\end{lemma}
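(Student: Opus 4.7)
Injectivity was established before the statement, so the task is surjectivity. Fix $[h]W(M,S) \in [W(G,S)/W(M,S)]^\sigma$ with $h \in N_G(S)(L)$, and put $n := h^{-1}\sigma(h) \in N_M(S)(L)$. The plan is to show first that $h$ already normalizes $A$, then to lift it modulo $M(L)$ to an $F$-rational element of $N_G(A)$, and finally to verify that this element is sent to $[h]W(M,S)$.

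For the first step, I claim $hAh^{-1} = A$. The subtorus $hAh^{-1} \subseteq hSh^{-1} = S$ is $\sigma$-stable, since $\sigma(h)A\sigma(h)^{-1} = hnAn^{-1}h^{-1} = hAh^{-1}$, using $n \in N_M(S) \subseteq M = C_G(A)$. The analogous cocharacter computation---for $\lambda \in X_*(A)$ one has $\sigma(\mathrm{Ad}(h)\lambda) = \mathrm{Ad}(\sigma(h))\lambda = \mathrm{Ad}(h)\mathrm{Ad}(n)\lambda = \mathrm{Ad}(h)\lambda$---shows $\sigma$ acts trivially on $X_*(hAh^{-1}) = \mathrm{Ad}(h)X_*(A)$, so $hAh^{-1}$ is an $F$-split subtorus of $M$. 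Because $M$ is a minimal $F$-Levi, $M_{\rm ad}$ is $F$-anisotropic and hence $A$ is the unique maximal $F$-split torus in $M$; since $hAh^{-1} \subseteq S \subseteq M$ is $F$-split of dimension $\dim A$, we conclude $hAh^{-1} = A$.

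Next, I invoke the theorem of Borel--Tits, applied both over $F$ and over $L$: since $A$ is $F$-split (and therefore also $L$-split), both $N_G(A)(F)/M(F) = W(G,A)$ and $N_G(A)(L)/M(L)$ are identified with the same Weyl group $W(\Phi(G,A))$ of the relative root system, and the natural inclusion between them is therefore a bijection. Hence there exist $g \in N_G(A)(F)$ and $m \in M(L)$ with $g = hm$. To verify that $w := [g] \in W(G,A)$ maps to $[h]W(M,S)$, pick $m' \in M(L)$ with $m'(gSg^{-1})m'^{-1} = S$, which exists because all maximal $L$-split tori of $M$ are $M(L)$-conjugate; then $m'g \in N_G(S)(L)$ and the image of $w$ under the map is $[m'g]W(M,S)$. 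The element $h^{-1}m'g = (h^{-1}m'h)m$ lies in $M(L)$ (using that $h$ normalizes $M$), and a direct calculation gives
\[
(h^{-1}m'g)\,S\,(h^{-1}m'g)^{-1} \;=\; h^{-1}\bigl(m'(gSg^{-1})m'^{-1}\bigr)h \;=\; h^{-1}Sh \;=\; S,
\]
so $h^{-1}m'g \in N_M(S)(L)$ and $[m'g]W(M,S) = [h]W(M,S)$, as required.

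I expect the principal difficulty to lie in the first step: one needs both the cocharacter computation showing $F$-splitness of $hAh^{-1}$ and the defining property of the minimal $F$-Levi $M$---namely $F$-anisotropy of $M_{\rm ad}$, which makes $A$ the unique maximal $F$-split torus of $M$---in order to conclude $hAh^{-1}=A$. The second step rests on the standard identification of the relative Weyl group with $W(\Phi(G,A))$, valid in both the $F$-rational and $L$-rational settings, and the third step is a direct verification.
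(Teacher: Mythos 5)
Your proof is correct and follows essentially the same route as the paper: show that the $\sigma$-invariance of the coset forces $\mathrm{Int}(h)|_A$ to be $\sigma$-equivariant, conclude $hAh^{-1}=A$, and then pass to an $F$-rational representative in $N_G(A)$. Two minor remarks: where you invoke $F$-anisotropy of $M_{\rm ad}$ to get uniqueness of the maximal $F$-split torus, the paper's argument is slightly cleaner, simply observing that $A$ and $hAh^{-1}$ are both maximal $F$-split subtori of the \emph{torus} $S$ and so must coincide; and your explicit appeal to Borel--Tits (equivalently, constancy of the finite group scheme $N_G(A)/M$, since $A$ is $F$-split) to identify $N_G(A)(L)/M(L)$ with $W(G,A)$ makes precise a step that the paper leaves implicit after concluding $x\in N_G(A)$.
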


\begin{proof}
First we prove the image $[mg]W(M,S)$ is $\sigma$-invariant.  This follows because the element  $(mg)^{-1} \sigma(m)g$ belongs to $M$, hence to $N_M(S)$.

Next we prove the surjectivity.  Suppose $x \in N_G(S)$ projects to an element in $W(G,S)$ which represents a $\sigma$-fixed coset $C$ in $W(G,S)/W(M,S)$, that is, $x^{-1}\sigma(x) \in M$.  Then the subtorus $xAx^{-1} \subset S$ is defined over $F$.   The inner automorphism ${\rm Int}(x): S \rightarrow S$, restricted to $A$ gives an isomorphism ${\rm Int}(x): A ~ \widetilde{\rightarrow}~ xAx^{-1}$ which is defined over $F$.  It follows that $xAx^{-1}$ is $F$-split.  Since $A$ and $xAx^{-1}$ are maximal $F$-split tori in $S$, they coincide.  Thus $x \in N_G(A)$, and the image of $x$ is the coset $C$.
\end{proof}

\begin{Remark} \label{qs_Weyl_bijection}
If $G$ is quasi-split over $F$, then $M = T $ and we recover the well-known result that $W(G,A) = 
W(G,S)^\sigma$.
\end{Remark}

\subsection{Proof of the decomposition}

We keep the notation of the previous subsection.  There is a commutative diagram of exact sequences with $\sigma$-equivariant morphisms and injective vertical maps
$$
\xymatrix{
0 \ar[r] & X_*(T)_I \ar[r] \ar[d]^{=} & \widetilde{W}_M \ar[r] \ar[d] & W(M,S) \ar[r] \ar[d] & 0 \\
0 \ar[r] & X_*(T)_I \ar[r]  & \widetilde{W} \ar[r]  & W(G,S) \ar[r]  & 0}
$$
(see \cite{HR}, Prop. 13).   The canonical map $\widetilde{W}_M \backslash \widetilde{W}\rightarrow W(M,S) \backslash W(G,S)$ is bijective and $\sigma$-equivariant, so we get
$$
[\widetilde{W}_M \backslash \widetilde{W}]^\sigma \cong [W(M,S) \backslash W(G,S)]^\sigma.
$$
Using the map $W(G,A) \hookrightarrow \widetilde{W}^\sigma$ constructed in Lemma \ref{taut_hom} we get a commutative diagram
$$
\xymatrix{
W(G,A) \ar[r] \ar[dr] & \widetilde{W}_M^\sigma \backslash \widetilde{W}^\sigma \ar[d] \\
 & (\widetilde{W}_M \backslash \widetilde{W})^\sigma.}
$$
The commutativity of this diagram follows using Remark \ref{alt_desc}.  Since the diagonal arrow is a bijection by the above discussion, and the vertical arrow is obviously an injection, it follows that all arrows in the diagram are bijections.  The decomposition
$$
\widetilde{W}^\sigma = \widetilde{W}^\sigma_M \cdot W(G,A)
$$
follows.  It is clear that $W(G,A)$ normalizes $\widetilde{W}^\sigma_M$.  This completes the proof of Lemma \ref{Cartan_lemma},(II) .

\section{End of proof of the Cartan decomposition}

\subsection{Invariants in the affine Weyl group of $M$}

\begin{lemma} \label{W_M_aff_sigma} Let $M$ again denote a minimal $F$-Levi subgroup, and let $W_{M,\rm aff}$ denote the affine Weyl group associated to $M$.  Then $W_{M, \rm aff}^\sigma = 1$.  
\end{lemma}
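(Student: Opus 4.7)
My strategy is to descend to the simply connected cover $M_{\mathrm{sc}}$ of $M_{\mathrm{der}}$, where the $F$-anisotropy of $M_{\mathrm{sc}}$ forces the Cartan decomposition (\ref{HR_decomp}) to collapse.

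By the Iwahori-Weyl discussion in the excerpt, there is a canonical $\sigma$-equivariant identification $W_{M,\mathrm{aff}} = \widetilde{W}_{M_{\mathrm{sc}}}$. Simple connectedness of $M_{\mathrm{sc}}$ forces $Z(\widehat{M_{\mathrm{sc}}}) = 1$, so $\kappa_{M_{\mathrm{sc}}}$ is trivial; in particular $\Omega_{M_{\mathrm{sc}}} = 1$ (giving $\widetilde{W}_{M_{\mathrm{sc}}} = W_{M,\mathrm{aff}}$) and $M_{\mathrm{sc}}(L)_1 = M_{\mathrm{sc}}(L)$. Because $M$ is a minimal $F$-Levi, $M_{\mathrm{ad}}$ is $F$-anisotropic, and so is $M_{\mathrm{sc}}$ (the isogeny $M_{\mathrm{sc}} \to M_{\mathrm{ad}}$ has finite central kernel: an $F$-split torus in $M_{\mathrm{sc}}$ projects to one in $M_{\mathrm{ad}}$ and is therefore trivial). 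Consequently $M_{\mathrm{sc}}(F)$ is compact and $\mathcal{B}(M_{\mathrm{sc}}(F)) = \mathcal{B}(M_{\mathrm{sc}}(L))^\sigma$ is a single point.

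Fix a $\sigma$-stable Iwahori $I' \subset M_{\mathrm{sc}}(L)$ corresponding to an alcove ${\bf a}'$ in the apartment for $S^M_{\mathrm{sc}}$; then $(I')^\sigma$ is a parahoric of $M_{\mathrm{sc}}(F)$ by \cite{BT2}, 5.2. I claim $(I')^\sigma = M_{\mathrm{sc}}(F)$. Indeed, the bijection between $\sigma$-stable facets of $\mathcal{B}(M_{\mathrm{sc}}(L))$ and facets of the single-point building $\mathcal{B}(M_{\mathrm{sc}}(F))$ makes ${\bf a}'$ the \emph{unique} $\sigma$-stable facet. Any $g \in M_{\mathrm{sc}}(F)$ acts $\sigma$-equivariantly on $\mathcal{B}(M_{\mathrm{sc}}(L))$, hence permutes $\sigma$-stable facets, and therefore must stabilize ${\bf a}'$ setwise. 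But because $\Omega_{M_{\mathrm{sc}}} = 1$, the full setwise stabilizer of ${\bf a}'$ in $M_{\mathrm{sc}}(L)$ coincides with its pointwise fixer $I'$: this follows from the HR decomposition $I' \backslash M_{\mathrm{sc}}(L) / I' \cong \widetilde{W}_{M_{\mathrm{sc}}}$ together with the fact that the stabilizer of ${\bf a}'$ inside $\widetilde{W}_{M_{\mathrm{sc}}}$ is $\Omega_{M_{\mathrm{sc}}} = 1$. Thus $M_{\mathrm{sc}}(F) \subseteq I'$, and the reverse inclusion is tautological, proving the claim.

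Finally, apply (\ref{HR_decomp}) to the pair $(M_{\mathrm{sc}}, I')$:
$$(I')^\sigma \backslash M_{\mathrm{sc}}(F) / (I')^\sigma \;\cong\; \widetilde{W}_{M_{\mathrm{sc}}, I'}^\sigma \backslash \widetilde{W}_{M_{\mathrm{sc}}}^\sigma / \widetilde{W}_{M_{\mathrm{sc}}, I'}^\sigma.$$
The left side is a singleton by the previous paragraph, and $\widetilde{W}_{M_{\mathrm{sc}}, I'} = 1$ for an Iwahori (\cite{HR}, Lemma 6). Hence $W_{M,\mathrm{aff}}^\sigma = \widetilde{W}_{M_{\mathrm{sc}}}^\sigma$ is a singleton, as needed. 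The main technical obstacle is the middle step $(I')^\sigma = M_{\mathrm{sc}}(F)$, which rests on $F$-anisotropy, the vanishing of $\Omega_{M_{\mathrm{sc}}}$, and the facet bijection; after that, (\ref{HR_decomp}) delivers the conclusion essentially for free.
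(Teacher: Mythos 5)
Your proof is correct and follows the same reduction as the paper: identify $W_{M,\mathrm{aff}}$ with the Iwahori--Weyl group $\widetilde{W}_{M_{\mathrm{sc}}}$, use $F$-anisotropy of $M_{\mathrm{sc}}$ together with the triviality of $\kappa_{M_{\mathrm{sc}}}$ to show $M_{\mathrm{sc}}(F) = I_{M_{\mathrm{sc}}}(F)$, and then conclude via the HR double-coset bijection. The only variation is in how the inclusion $M_{\mathrm{sc}}(F) \subseteq I_{M_{\mathrm{sc}}}$ is obtained: the paper has each $m \in M_{\mathrm{sc}}(F)$ fix the single point of $\mathcal{B}(M_{\mathrm{sc}}(F))$ and invokes the Claim from Lemma \ref{parahoric_cap_Levi} to upgrade a fixed point to pointwise fixing of the alcove, whereas you use the facet bijection (unique $\sigma$-stable facet) plus $\Omega_{M_{\mathrm{sc}}} = 1$ to show $m$ stabilizes the alcove setwise and therefore lies in $I_{M_{\mathrm{sc}}}$; both routes are minor variants of the same anisotropy argument.
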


\begin{proof}
We identify $W_{M,\rm aff}$ with the Iwahori-Weyl group $N_{M_{\rm sc}}(S^M_{\rm sc})(L)/T^M_{\rm sc}(L)_1$.  Let $I_{M_{\rm sc}}$ denote the Iwahori subgroup of $M_{\rm sc}(L)$ corresponding to a $\sigma$-invariant alcove ${\bf a}^{M_{\rm sc}}$ in the apartment $\mathcal A^{M_{\rm sc}}_L = X_*(S^M_{\rm sc})_{\mathbb R}$ of $\mathcal B(M_{\rm sc}(L))$ associated to the torus $S^M_{\rm sc}$.  By \cite{HR}, Remark 9, the set $W_{M,\rm aff}^\sigma$ is in bijective correspondence with
$$
I_{M_{\rm sc}}(F) \backslash M_{\rm sc}(F) / I_{M_{\rm sc}}(F).
$$
Therefore it is enough to prove that $M_{\rm sc}(F) = I_{M_{\rm sc}}(F)$.  
But $M_{\rm sc}(F) = M_{\rm sc}(F)_1 \subseteq I_{M_{\rm sc}}$.  To prove the inclusion, note that an element in $M_{\rm sc}(F)_1$ acts trivially on the apartment $\mathcal A^{M_{\rm sc}}_L$ (cf. the Claim above), hence fixes ${\bf a}^{M_{\rm sc}}$.   Thus $M_{\rm sc}(F) = I_{M_{\rm sc}}(F)$ and we are done.
\end{proof}

\subsection{Conclusion of the proof of Theorem \ref{Cart_decomp_stmt}}

We have fixed the $\sigma$-stable alcove ${\bf a}$ and this determines the $\sigma$-stable alcove ${\bf a}^M$ and the corresponding subgroup $\Omega_M \subset \widetilde{W}_M$.  
There is a canonical $\sigma$-equivariant decomposition $\widetilde{W}_M = W_{M,\rm aff} \rtimes \Omega_M$, so in view of the above lemma, we deduce that
$$
\widetilde{W}_M^\sigma = \Omega_M^\sigma.
$$
This completes the proof of the last part, namely (III), of Lemma \ref{Cartan_lemma}.  Since the Theorem \ref{Cart_decomp_stmt} is a consequence of Lemma \ref{Cartan_lemma}, we have proved Theorem \ref{Cart_decomp_stmt}. \qed

\section{Characterization of special maximal compact subgroups} \label{Ktilde_char_sec}

Let 
$$v_G : G(L) \rightarrow X^*(Z(\widehat{G}))_I/torsion$$ denote the homomorphism derived from the Kottwitz homomorphism $$\kappa_G: G(L) \rightarrow X^*(Z(\widehat{G}))_I$$ in the obvious way.  Denote its kernel by $G(L)^1$ and let $G(F)^1 = G(L)^1 \cap G(F)$.  Note that if $M$ is a minimal $F$-Levi subgroup of $G$, then $M(F)^1$ is the unique maximal compact open subgroup of $M(F)$, consistent with the notation used in the introduction.

Let $K := \mathcal G_{v_F}^\circ(\calO_F)$, the maximal parahoric subgroup of $G(F)$ corresponding to 
$v_F$.  By \cite{HR}, Prop.~3 and Remark 9, we have the equality
\begin{equation*}
K = G(F)_1 \cap {\rm Fix}({\bf a}_0).
\end{equation*}

Using the Claim from the proof of Lemma \ref{parahoric_cap_Levi} in the case $M = G$, we derive the equality
\begin{equation} \label{K=}
K = G(F)_1 \cap {\rm Fix}(v_F).
\end{equation}

Our goal is to prove the analogous description of $\widetilde{K}$.

\begin{lemma} \label{tildeK_char} The special maximal compact subgroups of $G(F)$ are precisely the subgroups of the form
\begin{equation} \label{tildeK=}
\widetilde K = G(F)^1 \cap {\rm Fix}(v_F),
\end{equation}
where $v_F$ ranges over the special vertices in the building $\mathcal B(G_{\rm ad}(F))$.
\end{lemma}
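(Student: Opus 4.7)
The plan is to set $\widetilde{K}' := G(F)^1 \cap {\rm Fix}(v_F)$ and to establish two things: (a) for any special vertex $v_F$, the group $\widetilde{K}'$ is a special maximal compact subgroup containing $K$; and (b) every special maximal compact subgroup of $G(F)$ arises in this way.

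First I would verify that $\widetilde{K}'$ is a compact open subgroup containing $K$ with finite index. The inclusion $K \subseteq \widetilde{K}'$ follows from (\ref{K=}) together with $G(F)_1 \subseteq G(F)^1$. The quotient $\widetilde{K}'/K$ embeds into $G(F)^1/G(F)_1$, which via the Kottwitz isomorphism $G(F)/G(F)_1 \cong X^*(Z(\widehat G))_I^\sigma$ is the torsion subgroup of a finitely generated abelian group, hence finite. So $\widetilde{K}'$ is open (it contains the open subgroup $K$) and compact (a finite extension of the compact $K$). I will also use the observation that any compact subgroup $H \subseteq G(F)$ lies in $G(F)^1$: its image $v_G(H)$ is a compact, hence trivial, subgroup of the discrete torsion-free abelian group $X^*(Z(\widehat G))_I/{\rm torsion}$.

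The core step is maximality. Given compact $H \supseteq \widetilde{K}'$, the compactness observation yields $H \subseteq G(F)^1$, and the Bruhat-Tits fixed point theorem (applied to $H$ acting on the complete ${\rm CAT}(0)$ space $\mathcal B(G_{\rm ad}(F))$) produces a fixed point $x$ of $H$. Since $K \subseteq H$ also fixes $x$, it suffices to show that the fix-locus of $K$ in $\mathcal B(G_{\rm ad}(F))$ is exactly $\{v_F\}$; granting this, $x = v_F$ gives $H \subseteq \widetilde{K}'$ and hence $H = \widetilde{K}'$. I would prove the fix-locus claim by contradiction: if $K$ fixed some $x \neq v_F$, convexity of fix-loci in ${\rm CAT}(0)$ spaces forces $K$ to fix the geodesic $[v_F, x]$, which enters the interior of some facet $\mathbf a \neq \{v_F\}$ with $v_F \in \overline{\mathbf a}$. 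Because $K \subseteq G(F)_1$ acts trivially on the factor $V_{G(F)}$ in $\mathcal A \cong \mathcal A_{\rm ad} \times V_{G(F)}$, fixing a point of $\mathbf a$ in the adjoint apartment amounts to fixing a slice of the $\sigma$-stable facet $\mathbf a \times V_{G(F)}$ in $\mathcal A_L$; the Claim in the proof of Lemma \ref{parahoric_cap_Levi} (taking $M = G$) then forces $K$ to fix the whole facet $\mathbf a \times V_{G(F)}$. Hence $K \subseteq P_{\mathbf a}$, the parahoric attached to $\mathbf a \times V_{G(F)}$; since $\{v_F\} \times V_{G(F)}$ is a face of $\mathbf a \times V_{G(F)}$, the reverse inclusion $P_{\mathbf a} \subseteq K$ is automatic, yielding $K = P_{\mathbf a}$ in contradiction with the bijective facet-parahoric correspondence because $\mathbf a \neq \{v_F\}$.

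For assertion (b), any maximal compact subgroup $\widetilde K_0$ fixing a special vertex $v_F$ satisfies $\widetilde K_0 \subseteq {\rm Fix}(v_F) \cap G(F)^1 = \widetilde{K}'$, and the maximality of $\widetilde K_0$ combined with compactness of $\widetilde{K}'$ then forces equality. The principal obstacle is the fix-locus argument in the maximality step: it must splice together convexity in the adjoint building, the trivial $V_{G(F)}$-action of $G(F)_1$, and the Claim from Lemma \ref{parahoric_cap_Levi}. Once that is in place, the rest of the argument is bookkeeping.
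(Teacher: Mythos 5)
Your proof is correct in substance, but it takes a genuinely different route from the paper's. The paper's proof is short: it observes that compact subgroups lie in $G(F)^1$ (via the absolute-value characterization of $G(L)^1$), then invokes \cite{BT1}, Cor.~(4.4.1), which says that every maximal compact subgroup of $G(F)^1$ is the stabilizer of a well-defined facet in $\mathcal B(G_{\rm der}(F))$; the definition of ``special'' then immediately gives $\widetilde K = G(F)^1 \cap {\rm Fix}(v_F)$, and the converse direction is handled exactly as you do, by finite index over $K$. You instead re-derive the relevant piece of the Bruhat--Tits structure theory from scratch: you prove that $\widetilde K' := G(F)^1 \cap {\rm Fix}(v_F)$ is maximal compact by applying the Bruhat--Tits fixed-point theorem to any compact $H \supseteq \widetilde K'$, then pinning down the fixed point using convexity of fix-loci in CAT(0) spaces plus the Claim from the proof of Lemma~\ref{parahoric_cap_Levi} (at $M=G$), plus injectivity of the facet--parahoric correspondence. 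What the paper's route buys is brevity, delegating the real work to the cited BT1 result; what your route buys is a self-contained argument that keeps the proof internal to the machinery the paper has already developed.

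Two points in your argument deserve sharpening, though neither is fatal. First, the Bruhat--Tits fixed point $x$ of $H$, and hence the geodesic $[v_F,x]$ and the facet $\mathbf a$ it enters, need not lie in the chosen apartment $\mathcal A_{\rm ad}$, so you cannot directly write $\mathbf a \times V_{G(F)}$ as a facet in $\mathcal A_L$ and apply the Claim. This is fixed by noting that the Claim (for $M=G$) extends to arbitrary facets of $\mathcal B(G(L))$ by conjugation: if $h \in G(L)$ carries the relevant apartment to $\mathcal A_L$, then conjugating by $h$ (using that $G(L)_1$ is normal) reduces to the standard apartment. Second, you should say a word about why your working definition of ``special maximal compact subgroup'' (a maximal compact subgroup fixing a special vertex) agrees with the paper's (a maximal compact whose BT1-canonical facet is a special vertex); this follows from your own computation that the fix-locus of $\widetilde K'$ is exactly $\{v_F\}$, so the two notions coincide once (a) is established, but as written the identification is implicit.
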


\begin{proof} A compact subgroup of $G(F)$ is automatically contained in $G(F)^1$.  This follows from the alternative description of $G(L)^1$ as the intersection of the kernels of the homomorphisms $|\chi|:G(L) \rightarrow \mathbb R_{>0}$, where $\chi$ ranges over $L$-rational characters on $G$. 

Thus, using \cite{BT1}, Cor.~(4.4.1), every maximal compact subgroup $\widetilde{K}$ of $G(F)$ (equiv., of 
$G(F)^1$) is the stabilizer in $G(F)^1$ of a well-defined facet in the building $\mathcal B(G_{\rm der}(F))$.  By definition, such a $\widetilde{K}$ is special if and only if the facet it stabilizes is a special vertex $v_F$.   In that case, we have $\widetilde{K} = G(F)^1 \cap {\rm Fix}(v_F)$.  

To show the converse, we must check that $G(F)^1 \cap {\rm Fix}(v_F)$ is compact (the argument above will then show it is (special) maximal compact).  Recall $K = \mathcal G_{v_F}^\circ(\calO_F)$ is compact and is given by (\ref{K=}).    Since $G(F)_1 \cap {\rm Fix}(v_F)$ has finite index in $G(F)^1 \cap {\rm Fix}(v_F)$, and since the former is compact, so is the latter.  This completes the proof.
\end{proof}

\begin{Remark}
Equation (8.0.1) can be generalized.  Let ${\bf a}_J$ denote any $\sigma$-stable alcove in $\mathcal B(G(L))$.  Then
$$
\mathcal G^\circ_{{\bf a}_J}(\calO_F) = G(F)_1 \cap {\rm Fix}({\bf a}^\sigma_J).
$$
\end{Remark}

\section{Statement of the Satake isomorphism}

In this section, let $P = MN$ denote any $F$-rational parabolic subgroup of $G$ with unipotent radical $N$, which has $M$ as a Levi factor.

\subsection{Iwasawa decomposition}

In light of Lemma \ref{tildeK_char}, the following version of the Iwasawa decomposition can be derived easily from similar statements in the literature (cf. \cite{BT1}, Rem.~(4.4.5) or Prop.~(7.3.1)):

\begin{prop} 
There is an equality of sets $$G(F) = P(F) \cdot \widetilde{K}(F).$$
\end{prop}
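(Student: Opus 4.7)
The plan is to deduce the decomposition from the classical Iwasawa decomposition of Bruhat--Tits \cite{BT1}, using Lemma \ref{tildeK_char} to identify the compact factor with $\widetilde K$.

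First I would invoke \cite{BT1}, Rem.~(4.4.5) or Prop.~(7.3.1), which supplies a factorization $G(F) = P(F) \cdot \mathrm{Stab}_{G(F)}(v_F)$, where $\mathrm{Stab}_{G(F)}(v_F)$ denotes the stabilizer of the special vertex in $G(F)$ acting on $\mathcal{B}(G_{\rm ad}(F))$. By Lemma \ref{tildeK_char} we have $\widetilde K = G(F)^1 \cap \mathrm{Stab}_{G(F)}(v_F)$, so $\widetilde K$ already lies in the second factor; the remaining content is the inclusion $\mathrm{Stab}_{G(F)}(v_F) \subseteq P(F) \cdot \widetilde K$.

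Next, given $s \in \mathrm{Stab}_{G(F)}(v_F)$, I would produce $z \in Z(G)(F) \subseteq P(F)$ with $v_G(z) = v_G(s)$, where $v_G$ denotes the Kottwitz map modulo torsion. Because $Z(G)(F)$ acts trivially on $\mathcal{B}(G_{\rm ad}(F))$, such a $z$ automatically lies in $\mathrm{Stab}_{G(F)}(v_F)$, so $z^{-1}s$ sits in $G(F)^1 \cap \mathrm{Stab}_{G(F)}(v_F) = \widetilde K$. This yields $s = z \cdot (z^{-1}s) \in P(F) \cdot \widetilde K$, completing the reduction.

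The step I expect to require the most care is the surjectivity (modulo torsion) of $v_G$ restricted to $Z(G)(F)$ onto the image of $v_G|_{\mathrm{Stab}_{G(F)}(v_F)}$. This should follow from the functoriality of the Kottwitz homomorphism together with the observation that $Z(G)^\circ$ has maximal $F$-split rank equal to $\dim V_{G(F)} = \dim(X_*(Z(G))_\Gamma \otimes \mathbb R)$, along which the stabilizer can vary; if any gap appears on torsion, the natural remedy is to enlarge $Z(G)(F)$ to $Z(M)(F)$, which still sits inside $P(F)$, and to use the splitting $\mathcal{A} \cong \mathcal{A}_{\rm ad} \times V_{G(F)}$ to check that the translation action captures all of $v_G(\mathrm{Stab}_{G(F)}(v_F))$.
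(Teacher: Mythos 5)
Your high-level plan---start from a Bruhat--Tits Iwasawa decomposition $G(F)=P(F)\cdot\mathrm{Fix}^{G(F)}(v_F)$ and then use Lemma~\ref{tildeK_char} to shrink the second factor to $\widetilde K$---is a reasonable reading of what the paper's brief citation has in mind. But the reduction step has a genuine gap: you need, for each $s\in\mathrm{Fix}^{G(F)}(v_F)$, some $z\in Z(G)(F)$ (or, in your fallback, $Z(M)(F)$) with $v_G(z)=v_G(s)$, i.e.\ you need $v_G(\mathrm{Fix}^{G(F)}(v_F))\subseteq v_G(Z(M)(F))$, and this is false in general. The discrepancy is not merely a ``torsion'' issue that enlarging $Z(G)$ to $Z(M)$ repairs. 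Concretely, take $G=GL_3(D)$ with $D$ a quaternion division algebra over $F$, and $v_F$ the vertex with parahoric $\widetilde K=K=GL_3(\calO_D)$ (here $\Lambda_M\cong\ZZ^3$ is torsion free, so $\widetilde K=K$). Then $v_G=\mathrm{val}\circ\mathrm{Nrd}\colon G(F)\to\ZZ$, and $\mathrm{Fix}^{G(F)}(v_F)=D^\times\cdot GL_3(\calO_D)$ (scalar $D^\times$ times the parahoric), so the element $\varpi_D\cdot\mathrm{id}_3$ lies in $\mathrm{Fix}^{G(F)}(v_F)$ and has $v_G=3$; but $v_G(Z(G)(F))=v_G(F^\times)=6\ZZ$ and $v_G(Z(M)(F))=v_G((F^\times)^3)=2\ZZ$, neither of which contains $3$. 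So the $z$ you want does not exist.

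What actually works is to replace $Z(M)(F)$ by the bigger group $M(F)\cap\mathrm{Fix}^{G(F)}(v_F)$, which still lies in $P(F)$: one must show $v_G\bigl(M(F)\cap\mathrm{Fix}^{G(F)}(v_F)\bigr)=v_G\bigl(\mathrm{Fix}^{G(F)}(v_F)\bigr)$, which can be seen by lifting $s$ modulo $\widetilde K$ to a length-zero element of $\widetilde W^\sigma$ fixing $v_F$ and then invoking the decomposition $\widetilde W^\sigma=\Omega_M^\sigma\rtimes W(G,A)$ of Lemma~\ref{Cartan_lemma} to replace that element by one lying in $\Omega_M^\sigma\subset M(F)$. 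In the example above the needed $m\in M(F)\cap\mathrm{Fix}^{G(F)}(v_F)$ is just $\varpi_D\cdot\mathrm{id}_3$ itself. As written, though, your argument does not establish the inclusion $\mathrm{Fix}^{G(F)}(v_F)\subseteq P(F)\widetilde K$.
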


We need the variant of this where $\widetilde{K}(F)$ is replaced by $K(F)$.  It will be enough to prove that
$$
\widetilde{K}(F) = (\widetilde{K} \cap M(F)) \cdot K(F).
$$
Using (\ref{HR_decomp}) 
together with Lemma \ref{Cartan_lemma}, we see that any element $\tilde k \in \widetilde{K}(F)$ satisfies
$$
\tilde k \in K(F) m K(F)
$$
for some $m \in \Omega_M^\sigma \subset M(F)$.  It follows that $m \in \widetilde{K}(F)$, and then since $\widetilde{K}(F)$ normalizes $K(F)$ (cf.~e.g.~Lemma \ref{tildeK_char}), we see that $\tilde k \in m K(F)$ as desired.

We have thus proved the first part of the following corollary.

\begin{cor} [Iwasawa decomposition] \label{Iwasawa}
There is an equality of sets 
$$G(F) = P(F) \cdot K(F).$$
Moreover, $P(F) \cap K(F) = (M(F) \cap K) \cdot (N(F) \cap K)$.
\end{cor}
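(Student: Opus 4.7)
For the first equality $G(F) = P(F) \cdot K(F)$, I would follow the approach already sketched in the text preceding the corollary. Start from the classical Iwasawa decomposition $G(F) = P(F) \cdot \widetilde{K}(F)$ provided by the preceding proposition, and reduce to showing $\widetilde{K}(F) \subseteq M(F) \cdot K(F)$. Given $\tilde{k} \in \widetilde{K}(F)$, combine the bijection (\ref{HR_decomp}) with Lemma \ref{Cartan_lemma}(III) to write $\tilde{k} = k_1 \, m \, k_2$ for some $k_1, k_2 \in K(F)$ and $m \in \Omega_M^\sigma \subseteq M(F)$. Then $m = k_1^{-1} \tilde{k} k_2^{-1} \in \widetilde{K}(F)$, and since $K$ is normal in $\widetilde{K}$ one rewrites $\tilde{k} = m \cdot (m^{-1} k_1 m) \cdot k_2 \in M(F) \cdot K(F)$. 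Combined with $M(F) \subseteq P(F)$, this delivers the global decomposition.

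For the ``moreover'' statement, the inclusion $\supseteq$ is immediate. For the reverse, fix $g \in P(F) \cap K(F)$ and use the unique $P(F) = M(F) \ltimes N(F)$ factorization to write $g = mn$ with $m \in M(F)$, $n \in N(F)$. The plan is to first locate $m$ and $n$ inside $\widetilde{K}$ via the classical intersection form of the Iwasawa decomposition, and then upgrade the containment from $\widetilde{K}$ to $K$. The Bruhat--Tits literature (\cite{BT1}, Prop.~(4.4.4) and Prop.~(7.3.1)) supplies the identity
\[
P(F) \cap \widetilde{K}(F) \;=\; \bigl(M(F) \cap \widetilde{K}\bigr) \cdot \bigl(N(F) \cap \widetilde{K}\bigr),
\]
so the uniqueness of the $P$-factorization yields $m \in \widetilde{K}$ and $n \in \widetilde{K}$.

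The key upgrade is to promote $n \in N(F) \cap \widetilde{K}$ to $n \in K$. Here I would invoke the standard fact that every unipotent element of $G(L)$ lies in $G(L)_1 = \ker(\kappa_G)$ (for instance, using functoriality of $\kappa_G$ and the vanishing $\kappa_{G_{\mathrm{sc}}} = 0$, together with the lift of unipotent subgroups to $G_{\mathrm{sc}}$). Hence $n \in N(F) \subseteq G(F)_1$. Combining with the characterizations (\ref{K=}) and (\ref{tildeK=}) of $K$ and $\widetilde{K}$, and the inclusion $G(F)_1 \subseteq G(F)^1$, we obtain
\[
N(F) \cap \widetilde{K} \;\subseteq\; G(F)_1 \cap G(F)^1 \cap \mathrm{Fix}(v_F) \;=\; G(F)_1 \cap \mathrm{Fix}(v_F) \;=\; K.
\]
Thus $n \in K$, and then $m = g n^{-1} \in K \cap M(F)$, completing the argument.

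The main obstacle is securing the intersection form of the classical Iwasawa decomposition for $\widetilde{K}$; once that is in hand, the upgrade from $\widetilde{K}$ to $K$ is essentially automatic because the factor to be upgraded is unipotent. A self-contained alternative would be to extract directly from Bruhat--Tits an Iwahori-type factorization $K = (K \cap N^-) \cdot (K \cap M) \cdot (K \cap N)$, and then invoke uniqueness of the big-cell factorization together with $N^- \cap P = \{1\}$ to force the $N^-$-component of any $g \in P(F) \cap K(F)$ to be trivial.
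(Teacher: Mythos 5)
Your argument for the first equality follows the paper's own text almost verbatim, so there is nothing to compare there. The interesting divergence is in the ``moreover'' intersection formula. The paper's proof is a one-line citation of \cite{BT2}, 5.2.4 (applied with $\Omega = \{v_F\}$), which supplies the factorization $P(F) \cap \mathcal G^\circ_{v_F}(\mathcal O_F) = (M(F) \cap \mathcal G^\circ_{v_F}(\mathcal O_F)) \cdot (N(F) \cap \mathcal G^\circ_{v_F}(\mathcal O_F))$ directly at the level of the parahoric group scheme. You instead start from the analogous intersection formula for the maximal compact $\widetilde{K}$ (from \cite{BT1}) and ``descend'' to $K$: the crucial new ingredient is the observation that unipotent radicals land in $G(F)_1$ (via functoriality of $\kappa_G$ and $Z(\widehat{G_{\rm sc}})=1$), whence $N(F) \cap \widetilde{K} = N(F) \cap G(F)_1 \cap {\rm Fix}(v_F) = N(F) \cap K$ by (\ref{K=}) and (\ref{tildeK=}), and then $m = gn^{-1} \in K$ for free. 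This is correct and genuinely different: the paper leans on the full strength of the Bruhat--Tits group-scheme formalism, whereas your route stays within the Kottwitz-homomorphism framework already developed in Sections 7--8 of the paper and only uses the older \cite{BT1} reference that was already invoked for the $\widetilde{K}$-decomposition. The trade-off is that your argument needs the extra fact $N(F) \subseteq G(F)_1$, but that fact is standard and essentially cost-free. Either approach is acceptable; yours has the pedagogical merit of making visible exactly why the passage from $\widetilde{K}$ to $K$ is harmless on the unipotent factor.
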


\begin{proof}
We need only show the second equality, which can be rewritten as
$$
P(F) \cap \mathcal G^\circ_{v_F}(\mathcal O_F) = (M(F) \cap \mathcal G^\circ_{v_F}(\mathcal O_F)) \cdot (N(F) \cap \mathcal G^\circ_{v_F}(\mathcal O_F)). 
$$
This follows from \cite{BT2}, 5.2.4 (taking the set denoted by $\Omega$ there to be $\{ v_F \}$).
\end{proof}

\subsection{Construction of the Satake transform}

We will follow the approach taken in \cite{HKP}, which treated the case of $F$-split groups. 

Recall that $\calH_K := C_c(K(F) \backslash G(F)/K(F))$, the spherical Hecke algebra of $K(F)$-bi-invariant compactly-supported functions on $G(F)$.  The convolution is defined using the Haar measure on $G(F)$ which gives $K(F)$ volume 1.

Set $R := \mathbb C[M(F)/M(F)_1]$.  Since $M(F)_1$ is the unique parahoric subgroup of $M(F)$, this is just the Iwahori-Hecke algebra for $M(F)$.  Let ${\bf  M} := C_c(M(F)_1 N(F) \backslash G(F)/ K(F))$, where the subscript ``c'' means we consider functions supported on finitely many double cosets.  Then ${\bf M}$ carries an obvious right convolution action under $\calH_K$.  It also carries a left action by $R$ given by normalized convolutions:
$$
r \cdot \phi(m) := \int_{M(F)} \delta_P^{1/2}(m_1) \, r(m_1) \, \phi(m_1^{-1} m) \, dm_1.
$$
Here  $dm_1$ is the Haar measure on $M(F)$ giving $M(F)_1$ volume 1, and $\delta_P$ is the modular function on $P(F)$ given by the normalized absolute value of the determinant of the adjoint action on ${\rm Lie}(N(F))$.  For $m \in M(F)$ we have
$$
\delta_P(m) := |{\rm det}({\rm Ad}(m) ~ ; ~ {\rm Lie}(N(F)))|_F.
$$
The actions of $R$ and $\calH_K$ on ${\bf M}$ commute, so that ${\bf M}$ is an $(R,\calH_K)$-bimodule.

\begin{lemma}
The $R$-module ${\bf M}$ is free of rank 1, with canonical generator 
$$v_1 := {\rm char}(M(F)_1 \, N(F) \, K(F)).$$
\end{lemma}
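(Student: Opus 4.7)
The plan is to first parametrize the double coset space $M(F)_1 N(F) \backslash G(F) / K(F)$ by $M(F)/M(F)_1$, and then use the twisted convolution action of $R$ to identify $\mathbf{M}$ with $R$ as an $R$-module. For the parametrization, I would show that the map $m \mapsto M(F)_1 N(F) m K(F)$ induces a bijection $M(F)/M(F)_1 \widetilde{\rightarrow} M(F)_1 N(F) \backslash G(F) / K(F)$. Surjectivity is immediate from the Iwasawa decomposition $G(F) = M(F) N(F) K(F)$ of Corollary \ref{Iwasawa}, since $M(F)$ normalizes $N(F)$. For injectivity, suppose $m = m_0 n_0 m' k$ with $m_0 \in M(F)_1$, $n_0 \in N(F)$, $m' \in M(F)$, $k \in K(F)$. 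Conjugating $n_0$ past $m'$ and using that $M$ normalizes $N$, I find $k \in M(F) N(F) \cap K(F) = (M(F) \cap K) \cdot (N(F) \cap K)$ by the intersection formula in Corollary \ref{Iwasawa}. Writing $k = \mu \nu$ and projecting the resulting equation along the semidirect product $M(F) \ltimes N(F)$ yields $(m')^{-1} m_0^{-1} m = \mu$. By Lemma \ref{para_cap_min_Levi}, $M(F) \cap K = M(F)_1$, so $\mu \in M(F)_1$, and hence $m \equiv m' \pmod{M(F)_1}$.

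Granting the bijection, the characteristic functions $v_m := \mathrm{char}(M(F)_1 N(F) m K(F))$, as $m$ ranges over coset representatives of $M(F)/M(F)_1$, form a $\mathbb{C}$-basis of $\mathbf{M}$. Next, let $[m] \in R$ denote the characteristic function of the coset $m M(F)_1 \subset M(F)$. A direct computation of the twisted convolution, using that $M(F)_1$ is compact (so $\delta_P^{1/2}$ is trivial on it), that $M(F)_1$ has Haar volume $1$, and that $m$ normalizes $M(F)_1 N(F)$, yields
\begin{equation*}
[m] \cdot v_1 \; = \; \delta_P^{1/2}(m) \cdot v_m.
\end{equation*}
Since $\{[m]\}_{m \in M(F)/M(F)_1}$ is a $\mathbb{C}$-basis of $R$, the map $R \to \mathbf{M}$, $r \mapsto r \cdot v_1$, sends this basis to the collection $\{\delta_P^{1/2}(m) v_m\}$, which differs from the $\mathbb{C}$-basis $\{v_m\}$ of $\mathbf{M}$ only by the nonzero scalars $\delta_P^{1/2}(m)$. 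Therefore $r \mapsto r \cdot v_1$ is a $\mathbb{C}$-linear isomorphism, exhibiting $\mathbf{M}$ as a free $R$-module of rank $1$ with canonical generator $v_1$.

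The main obstacle is Step 1, where the disjointness of the parametrizing double cosets relies on the precise intersection formula $P(F) \cap K(F) = (M(F) \cap K)(N(F) \cap K)$ of Corollary \ref{Iwasawa}, combined with the identification $M(F) \cap K = M(F)_1$ from Lemma \ref{para_cap_min_Levi}. Once the bijection is established, everything else reduces to a bookkeeping check with the normalized convolution, together with the elementary observation that $\delta_P$ is trivial on any compact subgroup of $M(F)$.
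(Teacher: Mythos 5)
Your proof is correct and takes essentially the same approach as the paper, which simply asserts ``This follows directly from Proposition \ref{Iwasawa}'' without elaboration. You have filled in exactly the intended details: the Iwasawa decomposition and the intersection formula $P(F) \cap K(F) = (M(F)\cap K)(N(F)\cap K)$ from Corollary \ref{Iwasawa}, together with $M(F)\cap K = M(F)_1$ from Lemma \ref{para_cap_min_Levi}, yield the bijection $M(F)/M(F)_1 \cong M(F)_1 N(F)\backslash G(F)/K(F)$, and the normalized convolution then sends the basis element $[m]$ to $\delta_P^{1/2}(m)\,v_m$.
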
 

\begin{proof}
This follows directly from Proposition \ref{Iwasawa}.
\end{proof}

Given $f \in \calH_K$, let $f^\vee \in R$ denote the unique element satisfying the identity
\begin{equation} \label{transform_def}
v_1 f = f^\vee v_1.
\end{equation}

It is obvious that 
\begin{align*}
\calH_K & \rightarrow R \\
f &\mapsto f^\vee 
\end{align*}
is a $\mathbb C$-algebra homomorphism.
 
Evaluating both sides of (\ref{transform_def}) on $m \in M(F)$ and using the usual $G = MNK$ integration formula (see \cite{Car}), we get the familiar expression
\begin{equation} 
f^\vee(m) = \delta_P^{-1/2}(m) \, \int_{N(F)} f(nm) \, dn = \delta_P^{1/2}(m) \, \int_{N(F)} f(mn) \, dn,
\end{equation}
where $dn$ gives $N(F) \cap K(F)$ measure 1.

\section{The Satake transform is an isomorphism}

\subsection{Weyl group invariance}
The first step is to prove that $f^\vee$ belongs to the subring $R^{W(G,A)}$ of $W(G,A)$-invariants in $R$.  Once this is proved, the functoriality of the Kottwitz homomorphism
$$
\kappa_M: M(F)/M(F)_1 ~ \widetilde{\rightarrow} ~ X^\ast(Z(\widehat{M}))^\sigma_I
$$ 
shows that $f^\vee \in \mathbb C[X^\ast(Z(\widehat{M}))^\sigma_I]^{W(G,A)}$, as well. 

The argument is virtually the same as Cartier's \cite{Car}.  Define a function on $m \in M(F)$ by
$$
D(m) = |{\rm det}({\rm Ad}(m) - 1 \, ; \, {\rm Lie}~G(F) /{\rm Lie}~M(F))|^{1/2}.
$$
Then exactly as in loc.~cit. one can prove the formula
\begin{equation} \label{fvee=orb.int}
f^\vee(m) = D(m) \int_{G/A} f(gmg^{-1}) \, \dfrac{dg}{da}
\end{equation}
on the Zariski-dense subset of elements $m \in M(F)$ which are regular semi-simple as elements in $G$.  Here $dg$ (resp. $da$) is the Haar measure on $G(F$) (resp. $A(F)$) which gives $K$ (resp. $K \cap A(F)$) volume 1.  By Lemma \ref{Cartan_lemma} (I), every element $w \in W(G,A)$ can be represented by an $x \in N_G(A)\cap K$.  Clearly $D(m) = D(xmx^{-1})$. Since the measure on $G/A$ is invariant under conjugation by $x$, we see as in loc.~cit.~that the integral in (\ref{fvee=orb.int}) is also invariant under $m \mapsto xmx^{-1}$.  Thus (\ref{fvee=orb.int}) is similarly invariant, as desired.

\begin{Remark} As in the case of $\calH_{\widetilde K}$, equation (\ref{fvee=orb.int}) also shows that $f^\vee$ is independent of the choice of $F$-rational parabolic subgroup $P$ which contains $M$ as a Levi factor.
\end{Remark}

\subsection{Upper triangularity}
The second step is to show that with respect to natural $\mathbb C$-bases of $\calH_K$ and $R^{W(G,A)}$, the map $f \mapsto f^\vee$ is ``invertible upper triangular'', hence is an isomorphism of algebras.

The set $\widetilde{W}^\sigma_K \backslash \widetilde{W}^\sigma/\widetilde{W}^\sigma_K \cong W(G,A)\backslash \Omega_M^\sigma$ provides a natural $\mathbb C$-basis for $\calH_K$ and for $R^{W(G,A)}$.  Recall that $\widetilde{W}$ has a natural structure of a {\em quasi-Coxeter group}
$$
\widetilde{W} = W_{\rm aff} \rtimes \Omega
$$
(cf. \cite{HR}, Lemma 14).  We extend the Bruhat order $\leq$ and the length function $\ell$ from $W_{\rm aff}$ to $\widetilde{W}$ in the usual way (cf.~loc.~cit.).  Given $x \in \widetilde{W}$, denote by $\tilde x \in \widetilde{W}$ the unique minimal element in $\widetilde{W}_K x \widetilde{W}_K$.  (Note that $\widetilde{W}_K$ is finite and that the usual theory of such minimal elements for Coxeter groups goes over to handle quasi-Coxeter groups.)  

By \cite{HR}, Remark 9, we may regard $\widetilde{W}_K^\sigma \backslash \widetilde{W}^\sigma/ \widetilde{W}_K^\sigma$ as a subset (the $\sigma$-invariant elements) in $\widetilde{W}_K \backslash \widetilde{W} / \widetilde{W}_K$.  For $y,y' \in W(G,A)\backslash \Omega_M^\sigma$ resp. $x,x' \in \widetilde{W}_K^\sigma \backslash \widetilde{W}^\sigma/ \widetilde{W}_K^\sigma$, we define the partial order $\preceq$ by requiring
\begin{align*}
y \preceq y' &\Leftrightarrow \tilde y \leq \tilde y', \,\,\,\, \mbox{resp.} \\
x \preceq x' &\Leftrightarrow \tilde x\leq \tilde x'.
\end{align*}
The set $W(G,A)\backslash \Omega_M^\sigma$ is countable and every element $y$ has only finitely many predecessors with respect to the partial order $\preceq$.   Therefore there is a total ordering $y_1, y_2, \dots$ on this set which is compatible with $\preceq$, meaning that $y_i \preceq y_j$ only if $i \leq j$.  Similar remarks apply to the partially ordered set $\widetilde{W}_K^\sigma \backslash \widetilde{W}^\sigma/ \widetilde{W}_K^\sigma$, and we get an analogous total ordering $x_1, x_2, \dots$ for it. 

We claim that the matrix for $f \mapsto f^\vee$ in terms of the bases $\{ y_i \}_1^\infty$ and $\{x_i\}_1^\infty$ is upper triangular and invertible.  The upper triangularity is the content of the next lemma.

\begin{lemma} \label{upper_triang_lem}
Suppose $x \in \widetilde{W}^\sigma$ and $y \in \Omega_M^\sigma$ and that 
\begin{equation} \label{NcapK}
N(F)y K(F) \cap K(F) x K(F) \neq \emptyset.  
\end{equation}
Then $\tilde y \leq \tilde x$.
\end{lemma}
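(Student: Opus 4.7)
The strategy is to transfer everything to $G(L)$ and work at the Iwahori level, where double cosets are indexed by $\widetilde{W}$ and carry a Bruhat order. Lifting the hypothesis, fix $g \in N(L) y K(L) \cap K(L) x K(L)$, and let $w_0 \in \widetilde{W}$ be the unique element with $g \in I(L) w_0 I(L)$. We will establish two inequalities, $\tilde{x} \le w_0$ (from the $KxK$ side) and $\tilde{y} \le w_0$ (from the $NyK$ side), and then conclude using the compatibility of Bruhat order with passage to minimal representatives in $\widetilde{W}_K$-double cosets.

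\textbf{Upper bound from $KxK$.} By \cite{HR}, Prop.~12, $\widetilde{W}_K$ is the Weyl group of the reductive quotient of the special fiber of $\mathcal{G}^\circ_{v_F}$, and the Bruhat decomposition there yields $K(L) = \bigsqcup_{u \in \widetilde{W}_K} I(L) u I(L)$. Hence $K(L) x K(L) = \bigcup_{u_1, u_2 \in \widetilde{W}_K} I(L)(u_1 x u_2) I(L)$, and minimality of $\tilde{x}$ in $\widetilde{W}_K x \widetilde{W}_K$ gives $u_1 x u_2 \ge \tilde{x}$ for all such $u_1, u_2$. In particular $w_0 \ge \tilde{x}$.

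\textbf{Lower bound from $NyK$.} Writing $g = nyk$ and decomposing $k = i_1 u i_2$ via the Bruhat decomposition of $K(L)$ (with $u \in \widetilde{W}_K$, $i_j \in I(L)$), one absorbs the Iwahori factors---using that $M(L)$ normalizes $N(L)$ together with the standard Iwahori factorization $I(L) = (I(L) \cap N^-(L))(I(L) \cap M(L))(I(L) \cap N(L))$---to express $g$ as an element of $N(L) \cdot yu \cdot I(L)$. The key step is an Iwahori-level refinement of the Iwasawa decomposition: any Iwahori Schubert cell meeting $N(L) v I(L)$ yields a Bruhat inequality $w \ge v'$ for a suitable $\widetilde{W}_K$-translate $v'$ of $v$. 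Applied with $v = yu$, and exploiting that $y \in \Omega_M^\sigma$ lies in the length-zero subgroup of $\widetilde{W}_M$ (so that its $\widetilde{W}_K$-double coset structure is controlled), this yields $\tilde{y} \le w_0$.

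\textbf{Conclusion and obstacle.} Given $\tilde{y} \le w_0$ and $w_0 \in \widetilde{W}_K x \widetilde{W}_K$, the Coxeter-theoretic lifting property (extended to the quasi-Coxeter group $\widetilde{W} = W_{\rm aff} \rtimes \Omega_G$) yields $\tilde{y} \le \tilde{x}$, since $\tilde{y}$ is already the minimum of its own $\widetilde{W}_K$-double coset. The principal obstacle is the $NyK$-side analysis: the comparison of $N(L)$-orbits with Iwahori Schubert cells in the affine flag variety of $G(L)$ is delicate, requiring careful bookkeeping of the ``translation part'' of $y$ relative to the decomposition $\widetilde{W} = W_{\rm aff} \rtimes \Omega_G$, together with use of the consequence $\kappa_G(y) = \kappa_G(x)$ that follows from the hypothesis via the fact that $N(L) \subset \ker \kappa_G$ and $K(L) \subset G(L)_1$.
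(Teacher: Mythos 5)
Your overall strategy is close in spirit to the paper's: lift to $G(L)$, work with Iwahori double cosets, and invoke a Bruhat--Iwasawa comparison plus the standard Coxeter lifting property. But two things are off. First, the plan as stated (``establish $\tilde x \le w_0$ and $\tilde y \le w_0$, then conclude'') is not a valid inference --- two elements sharing an upper bound in the Bruhat order need not be comparable. What you actually need, and what your Conclusion paragraph correctly switches to, is $\tilde y \le w_0$ \emph{together with} $w_0 \in \widetilde W_K \, x\, \widetilde W_K$, from which the double-coset lifting property gives $\tilde y \le \tilde x$.

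Second and more seriously, the crux of the argument --- getting the lower bound from the $NyK$ side --- has a genuine gap. You propose to rewrite $g = n\,y\,i_1 u\, i_2$ (with $k = i_1 u i_2$ the Bruhat--Iwahori decomposition of $k\in K(L)$) as an element of $N(L)\cdot yu\cdot I(L)$ by ``absorbing the Iwahori factors.'' This does not work: moving $i_1$ past $u$ requires conjugating $i_1$, whose Iwahori factorization has a component in $N^-(L)\cap I(L)$, and neither that component nor the $M(L)\cap I(L)$ component can be pushed into $N(L)$ on the left or into $I(L)$ on the right in general. The paper avoids this entirely: from $g = nyk$ one simply multiplies by $k^{-1}$ on the right, so $ny \in K(L)\,x\,K(L) \cap N(L)\,y\,I(L)$; since $K(L)\,x\,K(L) = \coprod_{x'\in \widetilde W_K x \widetilde W_K} I(L)\,x'\,I(L)$ (by the relations $K(L)=I(L)\widetilde W_K I(L)$ and $I w I w' I \subseteq \coprod_{w''\le w'} I w w'' I$), one gets $ny = i\,x'\,i'$ for some $x'\in \widetilde W_K x \widetilde W_K$, and then the cocharacter trick ($\varpi^\lambda n \varpi^{-\lambda}\in I(L)$ for suitable $\lambda$) gives $y \le x'$, hence $\tilde y \le \tilde x$. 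Your invocation of an ``Iwahori-level refinement of the Iwasawa decomposition'' involving a ``$\widetilde W_K$-translate $v'$ of $v$'' is also not quite the standard statement (the standard comparison is: $N v I \cap I w I \ne \emptyset$ implies $v\le w$, with no translate), so even granting your reduction, the appeal would need to be made precise. The final remark about $\kappa_G(y)=\kappa_G(x)$ is true but plays no role in the argument.
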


\begin{proof}
Let $I$ denote the Iwahori subgroup of $G(L)$ associated to the $\sigma$-stable alcove ${\bf a}$, as defined earlier.  We shall need two BN-pair relations.  The first is the relation
\begin{equation} \label{1st_rel}
K(L) = I(L)\, \widetilde{W}_K\, I(L).
\end{equation} 
This follows easily using \cite{HR}, Prop.~8.   The second is the relation
\begin{equation} \label{2nd_rel}
I(L)\, w \,I(L)\, w'\, I(L) \subseteq \coprod_{w'' \leq w'} I(L)\, w\, w''\, I(L).
\end{equation}
This relation per se does not appear in the literature, but it follows easily from the BN-pair relations established in \cite{BT2}, 5.2.12 (cf. \cite{HR}, paragraph following Lemma 17).

Using (\ref{1st_rel}) and (\ref{2nd_rel}) we see that (\ref{NcapK}) implies that 
\begin{equation} \label{NcapI}
N(L) y I(L) \cap I(L) \, x' \, I(L) \neq \emptyset
\end{equation}
for some $x' \in \widetilde{W}_K x \widetilde{W}_K$.  Write
\begin{equation} \label{ny}
ny = i \, x' \, i'
\end{equation}
for $n \in N(L)$, and $i, i' \in I(L)$.  Choose a cocharacter $\lambda \in X_*(A)$ such that $\varpi^\lambda n \varpi^{-\lambda} \in I(L)$.  Then multiplying (\ref{ny}) by $\varpi^\lambda$ we see that
$$
I(L) \,\varpi^\lambda y \, I(L) \subseteq I(L) \, \varpi^\lambda \, I(L) \, x' \, I(L).
$$
Using (\ref{2nd_rel}) again we deduce that 
$$
I(L) \, \varpi^\lambda y \, I(L) = I(L) \, \varpi^\lambda x'' \, I(L)
$$
and hence $y = x''$ for some $x'' \in \widetilde{W}$ with $x'' \leq x'$.  Thus $\tilde{y} \leq x'$.  A standard argument then shows that $\tilde y \leq \tilde{x}$, which is what we wanted to prove. 
\end{proof}

Finally, the invertibility follows from the obvious fact that
$$
N(F)x K(F) \cap K(F) x K(F) \neq \emptyset.
$$

This completes the proof that $f \mapsto f^\vee$ is an isomorphism. \qed

\section{The structure of $\Lambda_M$} \label{structure_sec}

It is clear that $\Lambda_M = X^\ast(Z(\widehat{M}))_I^\sigma$ is a finitely-generated abelian group.  In this section we make it more concrete in various situations.

\subsection{General results}

As before, in this subsection $T$ denotes the centralizer in $G$ of the torus $S$.  Recall that we can assume $S$ is defined over $F$, and so $T$ is also defined over $F$.  Recall also that $T^M_{\rm sc}$ denotes the pull-back of $T$ via $M_{\rm sc} \rightarrow M$.

\begin{lemma} \label{extn}
There is an embedding $X_*(T)^\sigma_I \hookrightarrow \Lambda_M$ whose cokernel is isomorphic to the finite abelian group ${\rm ker}[X_*(T^M_{\rm sc})_\Gamma \rightarrow X_*(T)_\Gamma]$.   
\end{lemma}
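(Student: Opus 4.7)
The strategy is to derive the embedding from the short exact sequence of $\Gamma$-modules
$$
0 \to X_*(T^M_{\rm sc}) \to X_*(T) \to X^*(Z(\widehat{M})) \to 0,
$$
in which the left arrow identifies $X_*(T^M_{\rm sc})$ with the coroot lattice $Q^\vee_M \subset X_*(T)$ (using that $M_{\rm sc}$ is simply connected semisimple) and the right arrow is dual to $Z(\widehat{M}) \hookrightarrow \widehat{T}$. Combined with the identification $\Lambda_M = X^*(Z(\widehat{M}))_I^\sigma$ from the earlier Proposition, the desired embedding will be obtained by passing to $I$-coinvariants and then to $\sigma$-invariants, and comparing the resulting maps to those induced by $T(F)\hookrightarrow M(F)$ via Kottwitz functoriality.

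First I would check that the sequence remains short exact after applying $(-)_I$. The only nontrivial point is the injectivity of $X_*(T^M_{\rm sc})_I \to X_*(T)_I$, and this is already established in the proof of Lemma \ref{parahoric_cap_Levi}: by \cite{HR}, the map factors as
$$
X_*(T^M_{\rm sc})_I \cong Q^\vee(\Sigma_M) \subseteq Q^\vee(\Sigma) \cong X_*(T_{\rm sc})_I \hookrightarrow X_*(T)_I.
$$
Then the snake lemma applied to the endomorphism $1-\sigma$ on each term yields the six-term exact sequence
$$
0 \to X_*(T^M_{\rm sc})_I^\sigma \to X_*(T)_I^\sigma \to \Lambda_M \to X_*(T^M_{\rm sc})_\Gamma \to X_*(T)_\Gamma \to X^*(Z(\widehat{M}))_\Gamma \to 0,
$$
using $(-)_{I,\sigma} = (-)_\Gamma$ and $\Lambda_M = X^*(Z(\widehat{M}))_I^\sigma$. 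Functoriality of the Kottwitz homomorphism applied to $T\hookrightarrow M$ identifies the middle map $X_*(T)_I^\sigma \to \Lambda_M$ with the one naturally induced by $T(F) \hookrightarrow M(F)$ on Kottwitz quotients.

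The key input, and the step I expect to be the main obstacle, is the vanishing $X_*(T^M_{\rm sc})_I^\sigma = 0$, which crucially uses that $M$ is a minimal $F$-Levi. Since $M_{\rm ad}$ is $F$-anisotropic, so is $M_{\rm sc}$, and consequently the maximal $F$-split subtorus of $T^M_{\rm sc}$ is trivial; equivalently $X_*(T^M_{\rm sc})^\Gamma = 0$. Now $X_*(T^M_{\rm sc})_I = Q^\vee(\Sigma_M)$ is $\ZZ$-torsion-free, so its $\sigma$-fixed subgroup is also torsion-free; and for a finite-group action on a free $\ZZ$-module coinvariants and invariants agree rationally, giving $(X_*(T^M_{\rm sc})_I^\sigma)\otimes\RR = X_*(T^M_{\rm sc})^\Gamma \otimes \RR = 0$, which forces $X_*(T^M_{\rm sc})_I^\sigma = 0$. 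The embedding and the identification of its cokernel with $\ker[X_*(T^M_{\rm sc})_\Gamma \to X_*(T)_\Gamma]$ then follow at once from the six-term sequence. Finiteness of the cokernel is automatic: the same anisotropy gives $X_*(T^M_{\rm sc})_\Gamma \otimes \QQ = 0$, so $X_*(T^M_{\rm sc})_\Gamma$ is already finite.
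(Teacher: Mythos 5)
Your proof is correct and follows essentially the same route as the paper: the same short exact sequence $0 \to X_*(T^M_{\rm sc})_I \to X_*(T)_I \to X^*(Z(\widehat{M}))_I \to 0$ (cited from \cite{HR}), the same six-term exact sequence from $H^*(\langle\sigma\rangle,-)$ (your snake-lemma argument on $1-\sigma$ is the same device), and the same use of anisotropy of $M_{\rm sc}$ to see the cokernel is finite. The one place you diverge is in establishing $X_*(T^M_{\rm sc})_I^\sigma = 0$: the paper cites the inclusion $X_*(T^M_{\rm sc})_I^\sigma \subset W_{M,\rm aff}^\sigma$ together with Lemma \ref{W_M_aff_sigma} (which is itself proved by a Bruhat--Tits argument), whereas you give a direct rank computation — anisotropy gives $X_*(T^M_{\rm sc})^\Gamma\otimes\QQ=0$, invariants and coinvariants agree rationally, and $X_*(T^M_{\rm sc})_I\cong Q^\vee(\Sigma_M)$ is torsion-free, so the $\sigma$-invariants vanish. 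This is a modest but genuine simplification: it avoids reimporting the building-theoretic content of Lemma \ref{W_M_aff_sigma} and makes the vanishing a self-contained piece of linear algebra over $\ZZ$, at the cost of spelling out the rational comparison of invariants and coinvariants that the paper leaves implicit.
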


\begin{proof}
Use the long exact sequence for $H^i(\langle \sigma \rangle, -)$ associated to the short exact sequence
$$
\xymatrix{
0 \ar[r] & X_\ast(T^M_{\rm sc})_I \ar[r] & X_\ast(T)_I \ar[r] & X^\ast(Z(\widehat{M}))_I \ar[r] & 0.}
$$
(For a discussion of this short exact sequence, see \cite{HR}, proof of Prop. 13.)
Note that $X_\ast(T^M_{\rm sc})_I^\sigma \subset W_{M,\rm aff}^\sigma = 1$ (cf. Lemma \ref{W_M_aff_sigma}).  Also, $X_*(T^M_{\rm sc})_\Gamma$ is finite because $M_{\rm sc}$ is anisotropic over $F$.  The lemma follows easily using this remarks.  
\end{proof}

\begin{cor} \label{gen_description}
\begin{enumerate}
\item [(a)] If $G$ is quasi-split over $F$, then $\Lambda_M = X_*(T)^\sigma_I$.
\item [(b)] If $G$ is split over $L$, then $\Lambda_M$ fits into the exact sequence
$$
1 \rightarrow X_*(A) \rightarrow \Lambda_M \rightarrow {\rm ker}[X_*(T^M_{\rm sc})_\sigma \rightarrow 
X_*(T)_\sigma] \rightarrow 0.
$$
\item [(c)] If $G$ is unramified over $F$, then $\Lambda_M = X_*(A)$.
\end{enumerate}

\end{cor}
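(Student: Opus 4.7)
The plan is to derive all three parts directly from Lemma \ref{extn}, which realizes $\Lambda_M$ as an extension of ${\rm ker}[X_*(T^M_{\rm sc})_\Gamma \to X_*(T)_\Gamma]$ by $X_*(T)^\sigma_I$. In each case the hypothesis forces one of the two pieces of this extension to simplify.

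For part (a), when $G$ is quasi-split over $F$ the minimal $F$-Levi subgroup $M = C_G(A)$ coincides with $T = C_G(S)$, since in the quasi-split case the centralizer of a maximal $F$-split torus is already a maximal torus. Then $M_{\rm sc}$ is trivial, so $T^M_{\rm sc} = 1$, the cokernel in Lemma \ref{extn} vanishes, and we conclude $\Lambda_M = X_*(T)^\sigma_I$ immediately.

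For part (b), if $G$ splits over $L$ then $T$ and $T^M_{\rm sc}$ are $L$-split $F$-tori, so $I$ acts trivially on both $X_*(T)$ and $X_*(T^M_{\rm sc})$. Thus $X_*(T)_I = X_*(T)$ and the $\Gamma$-coinvariants appearing in Lemma \ref{extn} collapse to $\sigma$-coinvariants on both lattices. Furthermore, the $L$-split hypothesis gives $S = T$, so $A$ is the maximal $F$-split subtorus of $T$; since the $\Gamma$-action on $X_*(T)$ factors through the (topological) generator $\sigma$ of ${\rm Gal}(L/F)$, we obtain $X_*(A) = X_*(T)^\Gamma = X_*(T)^\sigma = X_*(T)^\sigma_I$. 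Substituting these identifications into Lemma \ref{extn} produces the stated exact sequence.

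Part (c) combines (a) and (b): an unramified group is both quasi-split over $F$ and split over $L$, so by (a) we have $\Lambda_M = X_*(T)^\sigma_I$, and by the $L$-split argument of (b) this group coincides with $X_*(A)$. I do not expect any genuine obstacle, as Lemma \ref{extn} has already done the work; the only mild care required is the identification $X_*(A) = X_*(T)^\sigma$ in the $L$-split case, which rests on the fact that the Galois action on an $L$-split $F$-torus is controlled by the single element $\sigma$.
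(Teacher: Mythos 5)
Your proof is correct and follows essentially the same route as the paper: part (b) is read off from Lemma~\ref{extn} after noting that $I$ acts trivially on $X_*(T)$ and $X_*(T^M_{\rm sc})$ when $G$ splits over $L$ (and identifying $X_*(T)^\sigma = X_*(A)$), and part (c) specializes (a) and (b). The only cosmetic divergence is in part (a), where the paper simply notes $M=T$ and reads $\Lambda_M = X^*(Z(\widehat T))^\sigma_I = X_*(T)^\sigma_I$ off the definition, whereas you route through Lemma~\ref{extn} by observing $M_{\rm sc}=1$ forces the cokernel to vanish; both are valid one-line arguments.
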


\begin{proof}
Part (a).  Since $G$ is quasi-split over $F$, we have $M = T$, and the desired formula follows directly from the definition of $\Lambda_M$.

Part (b) follows immediately from Lemma \ref{extn}.  

Part (c) follows as a special case of either (a) or (b).  Part (c) was known previously (cf. \cite{Bo}, 9.5).
\end{proof}

\begin{Remark}
If $G$ is semi-simple and anisotropic, then $\Lambda_M$ is finite.  There are examples, namely $G = D^\times/F^\times$ for $D$ a central simple division algebra over $F$ with ${\rm dim}_F(D) > 1$, 
where $\Lambda_M \neq 0$.  

At the opposite extreme, let $E/F$ denote a finite totally ramified extension.  Consider the ``diagonal'' embedding ${\mathbb G}_m \hookrightarrow {\rm R}_{E/F}{\mathbb G}_m$ and set $G = ({\rm R}_{E/F}{\mathbb G}_m)/{\mathbb G}_m$.  Then $\Lambda_G$ is torsion, and non-zero if $E \neq F$.
\end{Remark}

The next proposition tells us how to measure the difference between the subgroups $K$ and $\widetilde{K}$ of $G(F)$ attached to a special vertex $v_F$.  This will complete the proof of Theorem \ref{Sat_thm}.  For an abelian group $H$ let $H_{\rm tor}$ denote its torsion subgroup.

\begin{prop}
There is a set-theoretic inclusion $\Omega_{M, \rm tor}^\sigma \subset \widetilde{K}$ which induces an isomorphism of groups
$$
\Lambda_{M,\rm tor} ~ \widetilde{\rightarrow} ~ \widetilde{K}/K.
$$
\end{prop}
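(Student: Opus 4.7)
The plan is to first identify $\Lambda_{M,\rm tor}$ with the concrete subquotient $M(F)^1/M(F)_1$, then exhibit an isomorphism of this subquotient with $\widetilde K/K$ through the natural map induced by the inclusion $M(F)^1 \hookrightarrow \widetilde K$. The identification on the left is free: since $\widetilde \Lambda_M = M(F)/M(F)^1$ is torsion-free (the free abelian quotient described in the introduction), and $\Lambda_M = M(F)/M(F)_1$, the torsion subgroup $\Lambda_{M,\rm tor}$ is precisely the kernel of $\Lambda_M \twoheadrightarrow \widetilde \Lambda_M$, namely $M(F)^1/M(F)_1$. Under $\kappa_M : \Omega_M^\sigma ~ \widetilde{\rightarrow} ~ \Lambda_M$ together with the set-theoretic section $\Omega_M^\sigma \hookrightarrow N_M(S)(F)$ coming from \cite{HR}, Remark 9 applied to $M$, this means every element of $\Omega_{M,\rm tor}^\sigma$ has a representative in $M(F)^1$.

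Next I would verify the set-theoretic inclusion $M(F)^1 \subseteq \widetilde K$. By Lemma \ref{tildeK_char} this amounts to showing $M(F)^1 \subseteq G(F)^1$ and $M(F)^1 \subseteq {\rm Fix}(v_F)$. The first holds because $M(F)^1$ is compact and any compact subgroup of $G(F)$ lies in $G(F)^1$, exactly as in the proof of Lemma \ref{tildeK_char}. For the second, $M(F)$ stabilizes the apartment $\mathcal A$ and acts on it purely by translations via the Bruhat-Tits valuation; since $M_{\rm ad}$ is anisotropic over $F$ and $M(F)^1$ is the unique maximal compact open subgroup of $M(F)$, the kernel of this translation action is precisely $M(F)^1$. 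In particular, $M(F)^1$ acts trivially on $\mathcal A$ and so fixes $v_F$. Since $M(F)_1 \subseteq K$ is automatic, the inclusion $M(F)^1 \subseteq \widetilde K$ descends to a well-defined group homomorphism $\phi : M(F)^1/M(F)_1 \to \widetilde K/K$.

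It remains to show $\phi$ is bijective. For injectivity: if $m \in M(F)^1 \cap K$ then $m \in M(F) \cap K = M(F)_1$ by Lemma \ref{para_cap_min_Levi}. For surjectivity: given $\tilde k \in \widetilde K \subseteq G(F)$, Theorem \ref{Cart_decomp_stmt} produces an $\omega \in \Omega_M^\sigma$ with $\tilde k \in K \omega K$, where we take the representative $\omega \in N_M(S)(F) \subseteq M(F)$. Writing $\tilde k = k_1 \omega k_2$ with $k_i \in K$ and using $K \subseteq \widetilde K$, we obtain $\omega = k_1^{-1} \tilde k k_2^{-1} \in \widetilde K$. Hence $\omega \in \widetilde K \cap M(F)$, which is a compact subgroup of $M(F)$, and therefore lies in the unique maximal compact $M(F)^1$. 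Using the normality of $K$ in $\widetilde K$ (so that $\omega^{-1} k_1 \omega \in K$), we compute
$$
\tilde k \cdot K \;=\; k_1 \omega k_2 \cdot K \;=\; k_1 \omega \cdot K \;=\; \omega (\omega^{-1} k_1 \omega) \cdot K \;=\; \omega \cdot K \;=\; \phi(\omega \, M(F)_1),
$$
proving surjectivity.

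The main obstacle is the second half of the second step: confirming that $M(F)^1$ fixes $v_F$. This is the only place where one must invoke input beyond the Cartan decomposition and the preceding lemmas, and it relies crucially on the anisotropy of $M_{\rm ad}$ over $F$ combined with the Bruhat-Tits description of the translation action of $M(F)$ on $\mathcal A$. Once this is in hand, the argument is essentially a combination of the Cartan decomposition with the identity $K \cap M(F) = M(F)_1$.
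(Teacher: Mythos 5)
Your proof is correct and follows essentially the same approach as the paper's: the containment $M(F)^1\subset G(F)^1\cap{\rm Fix}(v_F)=\widetilde K$ via Lemma \ref{tildeK_char}, injectivity from $K\cap M(F)=M(F)_1$ (Lemma \ref{para_cap_min_Levi}), and surjectivity from the Cartan decomposition together with the normality of $K$ in $\widetilde K$. The only cosmetic difference is at the last step, where the paper shows the representing element $\omega\in\Omega_M^\sigma$ of a coset of $\widetilde K/K$ is torsion directly (via $\omega^r\in\Omega_M^\sigma\cap K=\{1\}$), whereas you identify $\Lambda_{M,\rm tor}$ with $M(F)^1/M(F)_1$ up front and then observe $\omega\in\widetilde K\cap M(F)\subseteq M(F)^1$ by compactness.
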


\begin{proof}
Clearly $\Omega_{M, \rm tor}^\sigma$ lies in $M(F)^1$ hence in $G(F)^1$.  Also, every element of $M(F)^1$ acts trivially on the apartment $\mathcal A_L^\sigma$, and in particular, fixes ${\bf a}^\sigma_0$.  This shows that $\Omega_{M, \rm tor}^\sigma \subset {\rm Fix}^{G(F)}(v_F) \cap G(F)^1 = \widetilde{K}$ (cf. Lemma \ref{tildeK_char}).  

We claim the induced homomorphism $\Omega_{M, \rm tor}^\sigma ~ \rightarrow ~ \widetilde{K}/K$ is an isomorphism.  It is injective because
 $$\Omega_M \cap K = \Omega_M \cap M(F) \cap K = \Omega_M \cap M(F)_1 = \{ 1 \}$$
(cf. Lemma \ref{para_cap_min_Levi}).

Let us prove surjectivity.  Any coset in $\widetilde{K}/K$ can be represented by an element $x \in \Omega_M^\sigma$.  We need to show this element is torsion.  Let $r$ be such that $x^r \in K$.  But then $x^r \in \Omega_M^\sigma \cap K = \{ 1 \}$ (see above), and we are done.   
\end{proof}

\begin{cor}
If $M_L$ is $L$-split group and $M_{\rm der} = M_{\rm sc}$, then $\Lambda_M$ is torsion-free, and for every special vertex $v_F$, we have $\widetilde{K}_{v_F} = K_{v_F}$.  
\end{cor}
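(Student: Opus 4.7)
The plan is to reduce everything to the preceding proposition, which identifies $\widetilde{K}_{v_F}/K_{v_F}$ with $\Lambda_{M,\rm tor}$. Consequently, it is enough to prove that under the two hypotheses the group $\Lambda_M$ is torsion-free; the equality $\widetilde{K}_{v_F} = K_{v_F}$ then follows at once from $\Lambda_{M,\rm tor} = 0$.

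For the torsion-freeness I would work directly from the Kottwitz description
$$\Lambda_M \ \cong \ X^*(Z(\widehat{M}))_I^\sigma$$
recalled in the introduction. The hypothesis that $M_L$ is $L$-split forces the inertia group $I$ to act trivially on the absolute based root datum of $M$, and in particular on the character lattice $X^*(Z(\widehat{M}))$; hence the $I$-coinvariants collapse and $\Lambda_M = X^*(Z(\widehat{M}))^\sigma$. The hypothesis $M_{\rm der} = M_{\rm sc}$ next forces $Z(\widehat{M})$ to be a connected complex torus, via the standard Langlands-dual dictionary in which $\pi_0(Z(\widehat{M}))$ is Pontryagin dual to $\pi_1(M_{\rm der})$, and in which $\pi_1(M_{\rm der}) = 0$ is equivalent to $M_{\rm sc} \to M_{\rm der}$ being an isomorphism. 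Thus $X^*(Z(\widehat{M}))$ is a finitely generated free abelian group, and its $\sigma$-invariants form a subgroup of a free abelian group, hence are themselves free; so $\Lambda_M$ is torsion-free.

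The only mildly nontrivial input is the dual-group identity ``$Z(\widehat{M})$ connected $\Leftrightarrow M_{\rm der} = M_{\rm sc}$'', but this is entirely standard and requires nothing beyond the definitions of the Langlands dual group; no new result about the building or about Hecke algebras is needed. One could alternatively attempt to read the conclusion off Corollary \ref{gen_description}(b), but the route above is cleaner because the identification $\Lambda_M = X^*(Z(\widehat{M}))^\sigma$ makes torsion-freeness a direct consequence of $Z(\widehat{M})$ being a torus, whereas going through the exact sequence of Lemma \ref{extn} would require the extra step of checking that the cokernel $\ker[X_*(T^M_{\rm sc})_\sigma \to X_*(T)_\sigma]$ vanishes.
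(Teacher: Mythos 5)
Your proof is correct and follows essentially the same route as the paper: both identify $\Lambda_M \cong X^*(Z(\widehat{M}))_I^\sigma$, use $L$-splitness of $M$ to drop the $I$-coinvariants, use $M_{\rm der} = M_{\rm sc}$ to deduce that $X^*(Z(\widehat{M}))$ is a free abelian group, and observe that a subgroup of a free abelian group has no torsion. The only difference is that you spell out the Langlands-dual dictionary and the final ``subgroup of free is free'' step, which the paper's one-line proof leaves implicit.
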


\begin{proof}
We have
\begin{equation} \label{Lambda_M_I=}
X^*(Z(\widehat{M}))_I = X^*(Z(\widehat{M})) 
\end{equation}
and the latter is torsion free since $M_{\rm der} = M_{\rm sc}$ is equivalent to $Z(\widehat{M})$ being connected.
\end{proof}

\begin{Remark}
The hypotheses on $M$ hold if $G_{\rm der} = G_{\rm sc}$ and $G_L$ is an $L$-split group.  \end{Remark}

\begin{cor}
If $G = G_{\rm sc}$, then $\widetilde{K} = K$ and $\Lambda_M$ is torsion-free.
\end{cor}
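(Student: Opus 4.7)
The plan is as follows: the key observation is that when $G = G_{\rm sc}$, the Langlands dual $\widehat{G}$ is adjoint, so it has trivial center. Indeed, for a semisimple simply connected group the cocharacter lattice $X_*(T)$ of a maximal torus coincides with the coroot lattice $Q^\vee$, and since $X^*(Z(\widehat{G})) = X_*(T)/Q^\vee$, we obtain $Z(\widehat{G}) = 1$. A fortiori $X^*(Z(\widehat{G}))_I = 0$, so both the Kottwitz homomorphism $\kappa_G : G(L) \to X^*(Z(\widehat{G}))_I$ and its real-valued quotient $v_G : G(L) \to X^*(Z(\widehat{G}))_I/\text{torsion}$ are identically zero.

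Consequently $G(L)_1 = G(L)^1 = G(L)$, and intersecting with $G(F)$ yields $G(F)_1 = G(F)^1 = G(F)$. Substituting into the formula $K = G(F)_1 \cap {\rm Fix}(v_F)$ from (\ref{K=}), and into the characterization $\widetilde{K} = G(F)^1 \cap {\rm Fix}(v_F)$ given by Lemma \ref{tildeK_char}, both subgroups collapse to $G(F) \cap {\rm Fix}(v_F)$, so $\widetilde{K} = K$.

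For the torsion-freeness of $\Lambda_M$, I would appeal to the preceding proposition, which provides a canonical isomorphism $\Lambda_{M,\rm tor} \cong \widetilde{K}/K$. Having just shown $\widetilde{K} = K$, this forces $\Lambda_{M,\rm tor} = 0$, so $\Lambda_M$ is torsion-free. There is no serious obstacle here: the entire argument is a short deduction from the triviality of $Z(\widehat{G})$ for simply connected $G$, combined with the characterizations of $K$ and $\widetilde{K}$ and the structure of $\widetilde{K}/K$ already established. Note that in contrast to the previous corollary, we do not require $G_L$ to be $L$-split, since the stronger hypothesis $G = G_{\rm sc}$ (as opposed to merely $G_{\rm der} = G_{\rm sc}$) kills $X^*(Z(\widehat{G}))$ outright, bypassing any analysis of the inertia action.
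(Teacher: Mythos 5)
Your proof is correct and follows essentially the same route as the paper: observe that $G = G_{\rm sc}$ forces $Z(\widehat{G}) = 1$, so $G(F)_1 = G(F)^1 = G(F)$, and then equations (\ref{K=}) and (\ref{tildeK=}) give $\widetilde{K} = K$, with torsion-freeness of $\Lambda_M$ following from the isomorphism $\Lambda_{M,\rm tor} \cong \widetilde{K}/K$. Your elaboration of why $Z(\widehat{G})$ is trivial (via $X_*(T) = Q^\vee$) is a standard fact the paper takes for granted, but the logical structure matches exactly.
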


\begin{proof}
Observe that since $Z(\widehat{G}) =1$ we have $G(F)_1 = G(F)^1 = G(F)$.  Then use (\ref{K=}) and (\ref{tildeK=}). 
\end{proof}

Of course, this corollary was already known (cf. \cite{BT2}, 4.6.32).

\subsection{Passing to inner forms}  \label{passing_subsec}

It is of interest to describe $\Lambda_M$ explicitly in terms of an appropriate maximal torus $\widehat{T}$ in $\widehat{G}$.  For quasi-split groups this has been done in Corollary \ref{gen_description}, (a), which proves that $\Lambda_M = X^*(\widehat{T})^\sigma_I= X^*(\widehat{T}^I)^\sigma$.  Here we study the effect of passing to an inner form of a quasi-split group.

Thus, we fix a connected reductive group $G^*$ which is quasi-split over $F$.  Recall that an inner form of $G^*$ is a pair $(G,\Psi)$ consisting of a connected reductive $F$-group $G$ and a $\Gamma$-stable $G^*_{\rm ad}(F^s)$-orbit $\Psi$ of $F^s$-isomorphisms $\psi: G \rightarrow G^*$.   The set of isomorphism classes of inner forms of $G^*$ corresponds bijectively to the set $H^1(F,G^*_{\rm ad})$, by the rule which sends $(G,\Psi)$ to the 1-cocycle $\tau \mapsto \psi \circ \tau(\psi)^{-1}$ for any $\psi \in \Psi$ (cf. \cite{Ko97}, 5.2).  

Now assume $(G,\Psi)$ is an inner form of $G^*$.  Denote the action of $\tau \in \Gamma$ on $G(F^s)$ (resp. $G^*(F^s)$) by $\tau$ (resp. $\tau^*$).  

Let $A$ be a maximal $F$-split torus in $G$, and let $S$ denote a maximal $F^{\rm un}$-split torus in $G$ which is defined over $F$ and contains $A$.  Such a torus $S$ exists by \cite{BT2}, 5.1.12, noting that that any $F$-torus which is split over $L$ is already split over $F^{\rm un}$.  Let $T = C_G(S)$ and $M = C_G(A)$.  
Then $T$ is a maximal torus of $G$, since the group $G_{F^{\rm un}}$ is quasi-split.  Let $A^*$, $S^*$, $T^*$ have the corresponding meaning for the group $G^*$, and assume that $T^*$ is contained in an $F$-rational Borel subgroup $B^* = T^* U^*$ of $G^*$.  Of course $T^* = C_{G^*}(A^*)$ since $G^*$ is quasi-split over $F$.

Let $P = MN$ be an $F$-rational parabolic subgroup of $G$ having Levi factor $M$ and unipotent radical $N$.  Let $P^*$ be the unique standard $F$-rational parabolic subgroup of $G^*$ which is $G^*(F^s)$-conjugate to $\psi(P)$ for all $\psi \in \Psi$ (cf. \cite{Bo}, section 3).  Let $M^*$ denote the unique Levi factor of $P^*$ which contains $T^*$.   Let $\Psi_M$ denote the set of $\psi \in \Psi$ such that $\psi(P) = P^*$ and $\psi(M) = M^*$.   Then $\Psi_M$ is a non-empty $\Gamma$-stable $M^*_{\rm ad}(F^s)$-orbit of $F^s$-isomorphisms $M \rightarrow M^*$; hence $M$ is an inner form of the $F$-quasi-split group $M^*$.

It is clear that $G_{F^{\rm un}}$ and $G^\ast_{F^{\rm un}}$ are isomorphic, since they are inner forms of each other and are both quasi-split (cf. \cite{Tits}, 1.10.3).  In fact it is easy to see that any inner twisting $G_{F^{\rm un}} ~ \widetilde{\rightarrow} ~ G^*_{F^{\rm un}}$ over $F^{\rm un}$ is $G^*(F^s)$-conjugate to an {\em isomorphism} of $F^{\rm un}$-groups. For this a key fact is that the image $T^*_{\rm ad}$ of $T^*$ in $G^*_{{\rm ad},F^{\rm un}}$ is an induced $F^{\rm un}$-torus.  The same remarks obviously apply to $M_{F^{\rm un}}$ and $M^*_{F^{\rm un}}$.  Hence we may choose $\psi_0 \in \Psi_M$ such that $\psi_0: M \rightarrow M^*$ is an $F^{\rm un}$-isomorphism and $\psi_0(S) = S^*$ (and thus also $\psi_0(T) = T^*$).  Since $\psi_0$ restricted to $A$ is defined over $F$, we see that $\psi_0(A)$ is an $F$-split subtorus of $T^*$ and hence $\psi_0(A) \subseteq A^*$.    

Let $\tilde{\sigma}$ denote any lift in $\Gamma$ of the Frobenius element $\sigma \in {\rm Gal}(F^{\rm un}/F)$.  We may write
$$
\psi_0 \circ \tilde{\sigma}(\psi_0)^{-1} = \psi_0 \circ \sigma(\psi_0)^{-1} = {\rm Int}(m^*_{\sigma})$$ 
for an element $m^*_{\sigma} \in N_{M^*}(S^*)(F^s)$ whose image in $M^*_{\rm ad}(F^s)$ is well-defined.  As operators on $X_*(T^*) = X^*(\widehat{T^*})$, we may write
\begin{equation} \label{first_w_sigma}
\psi_0 \circ \sigma(\psi_0)^{-1} = w^*_\sigma
\end{equation}
for a well-defined element $w^*_\sigma \in W(M^*,S^*)(F^{\rm un})$.  Denote by $w_\sigma$ the preimage under the isomorphism $\psi_0: W(M,S)(F^{\rm un}) ~ \widetilde{\rightarrow} ~ W(M^*,S^*)(F^{\rm un})$ of $w^*_\sigma$.  Then (\ref{first_w_sigma}) translates into the equality
\begin{equation} \label{w_sigma}
\sigma \circ \psi^{-1}_0 \circ (\sigma^*)^{-1} \circ \psi_0 = w_\sigma
\end{equation} 
of operators on $X_*(T) = X^*(\widehat{T})$.  In defining $w_\sigma \in W(M,S)$, we fixed the objects $A$ and $S$ (needed to specify the ambient group $W(M,S)$) and along the way we also chose several additional objects: $P$, $A^*$, $S^*$, $B^*$, and an element $\psi_0 \in \Psi_M$ such that $\psi_0(S) = S^*$ and $\psi_0: M \rightarrow M^*$ is $F^{\rm un}$-rational.  It is straightforward to check that the element $w_\sigma \in W(M,S)$ is independent of all of these additional choices.

\subsection{Inner forms of split groups}
 
In this subsection we assume $G^*$ is $F$-split.  Then $A^* = S^* = T^*$, and $G_{F^{\rm un}}$ and $M_{F^{\rm un}}$ are split groups.  In particular, the relative Weyl group $W(M^*,S^*)$ coincides with the absolute Weyl group $W(M^*,T^*)$.  Using $\psi_0$ as above, we may regard $w_\sigma$ as an element of $W(M,S) = W(M,T)^I = W(\widehat{M}, \widehat{T})^I$.

For the next lemma, we need to recall the notion of cuspidal elements of Weyl groups.  Let $(W,S)$ be any Coxeter group with a finite set $S$ of simple reflections.  We say $w \in W$ is {\em cuspidal} if every conjugate of $w$ is elliptic, that is, every conjugate $w'$ has the property that any reduced expression for $w'$ contains every element of $S$.  Note that the identity element of $W$ is cuspidal if and only if $S = \emptyset$, in which case $W$ itself is trivial.

\begin{lemma} \label{w_sigma_lem_split}
\begin{enumerate}
\item[(a)]  The element $w_\sigma$ is a cuspidal element of the absolute Weyl group $W(M,T)$ of $M$.  
\item[(b)] The group $M$ is of type $A$ and the element $w_\sigma$ is a Coxeter element of $W(M,T)$.
\item [(c)] We have the equality $Z(\widehat{M}) =  \widehat{T}^{w_\sigma}$.
\end{enumerate}
\end{lemma}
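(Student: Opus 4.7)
The plan is to handle (a), (b), (c) in sequence, with the ellipticity of $w_\sigma$ as the shared backbone. Since $G^*$ is $F$-split, $\sigma^*$ acts trivially on $X_*(T^*)$, so equation~(\ref{w_sigma}) says that under the isomorphism $\psi_0 : X_*(T) \to X_*(T^*)$ the Frobenius $\sigma$ corresponds precisely to $w_\sigma$ acting on $X_*(T^*)$.

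For (a), I would unpack the minimality of $M$ as an $F$-Levi into the condition that $M_{\rm ad}$ is $F$-anisotropic, equivalently $X_*(T_{\rm ad})^\sigma_{\mathbb Q} = 0$. Via the identification above, this says $w_\sigma$ has no nonzero fixed vector on the reflection representation $X_*(T^*_{\rm ad})_{\mathbb Q}$ of $W(M^*, T^*)$, i.e.\ $w_\sigma$ is elliptic. Ellipticity is stable under conjugation (a conjugate acts on the same reflection representation, also without fixed vectors), so every conjugate of $w_\sigma$ is elliptic; this is exactly the cuspidality condition in the paper's sense.

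For (b), I would invoke the classification of $F$-anisotropic semisimple groups over nonarchimedean local fields, due to Kneser and Bruhat--Tits: every absolutely simple $F$-anisotropic semisimple group over a $p$-adic $F$ which is an inner form of a split group is of type $A$, arising as $SL_1(D_i)$ for a central division algebra $D_i$ over $F$. Applied to the inner form $M$ of the split group $M^*$, this forces each simple factor of $M^*_{\rm der}$ to be of type $A_{n_i-1}$, so $W(M^*, T^*) = \prod_i S_{n_i}$. The elliptic elements of $S_n$ are precisely the single $n$-cycles, i.e.\ the Coxeter elements; hence by (a), $w_\sigma$ is a Coxeter element of $W(M^*, T^*) = W(\widehat{M}, \widehat{T})^I$.

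For (c), the equality $Z(\widehat{M}) = \widehat{T}^{w_\sigma}$ translates, at the level of character groups, into the identity $(1 - w_\sigma)X_*(T) = \mathbb Z\{\text{coroots of }M\}$ inside $X_*(T)$. Using the type $A$ structure and the Coxeter form of $w_\sigma$ from (b), this reduces factor by factor to the elementary lattice identity that for $w$ a single $n$-cycle acting on the standard cocharacter lattice of a $GL_n$-type factor, the image of $(1-w)$ is exactly $\mathbb Z\{e_i - e_{i+1}\}$, i.e.\ the coroot lattice. The main obstacle is (b), which rests on the nontrivial classification of anisotropic reductive groups over $p$-adic fields; granted this, part (a) is a direct dictionary between anisotropy and ellipticity, and part (c) reduces to an elementary type $A$ computation.
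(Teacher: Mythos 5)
Your parts (a) and (b) track the paper closely. In (a), the paper argues the contrapositive (if some conjugate of $w_\sigma$ misses a simple reflection, the corresponding fundamental coweight is a nonzero $\sigma$-fixed cocharacter of $T_{\rm ad}$, contradicting anisotropy of $M_{\rm ad}$), whereas you argue directly that anisotropy forces $w_\sigma$ to act without fixed vectors on the reflection representation, and this property is conjugation-invariant. Same content; one small point worth making explicit is the implication from ``no nonzero fixed vector'' to the paper's ``full support'' notion of elliptic (if a conjugate lies in a proper standard parabolic $W_J$ it fixes the fundamental coweights for $S\setminus J$), since the paper's definition of cuspidal is phrased via supports. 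Your (b) is identical to the paper's.

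Part (c) is where there is a genuine gap. Your translation of $Z(\widehat M)=\widehat T^{w_\sigma}$ into the lattice identity $(1-w_\sigma)X_*(T)=Q^\vee(M)$ is correct, but the claim that this ``reduces factor by factor to the elementary lattice identity\dots for a $GL_n$-type factor'' is not justified and does not hold in general. The lattice $X_*(T)$ is merely some lattice containing $Q^\vee(M)$; it is not in general a product of standard $\mathbb Z^{n_i}$ lattices attached to the simple factors, and the identity $(1-w)L=Q^\vee$ is highly sensitive to the choice of lattice $L\supseteq Q^\vee$. Already for $L=Q^\vee(A_{n-1})$ itself and $w$ an $n$-cycle, the sublattice $(1-w)L$ has index $n$ in $Q^\vee$, so the identity fails for that $L$. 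The paper proves (c) by a device that avoids any decomposition of $X_*(T)$: the inclusion $Z(\widehat M)\subseteq\widehat T^{w_\sigma}$ is automatic since the Weyl group acts trivially on the center, and for the reverse inclusion the paper maps $\widehat T^{w_\sigma}$ into $(\widehat T_{\rm ad})^{w_\sigma}$ inside the adjoint quotient $\widehat M_{\rm ad}$ (a product of projective linear groups), computes the fixed points there, and concludes $\widehat T^{w_\sigma}\subseteq\ker(\widehat M\to\widehat M_{\rm ad})=Z(\widehat M)$. You should either adopt that adjoint-quotient device, or else supply the missing justification that the particular lattice $X_*(T)$ arising here satisfies $Q^\vee(M)\subseteq(1-w_\sigma)X_*(T)$; asserting a ``$GL_n$-type'' factorization does not suffice.
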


\begin{proof}
Part (a).  We may assume $M \neq T$ and hence $W(M,T)$ is not trivial.  Suppose the assertion is false.  Then there is a notion of simple positive root for $M,T$ and a corresponding Coxeter group structure on $W(M,T)$, for which $w_\sigma$ is not an elliptic element.  Let $s_i$ denote a simple reflection in $W(M,T)$ which does not appear in a reduced expression for $w_\sigma$.  Then the corresponding fundamental coweight $\lambda_i \in X_*(T/Z(M))$ for $M_{\rm ad}$ is fixed by $w_\sigma$.  It is also fixed by $\psi^{-1}_0 \circ (\sigma^*)^{-1} \circ \psi_0$.  Thus by (\ref{w_sigma}) $\lambda_i$ is fixed by $\sigma$, and $\lambda_i(\mathbb G_m)$ is an $F$-split torus in $M_{\rm ad}$.  This contradicts the fact that $M_{\rm ad}$ is anisotropic over $F$.

Part (b).  Since every anisotropic $F$-group is type A (cf. Kneser \cite{Kn} and Bruhat-Tits \cite{BT3}, 4.3), the group $M$ is type $A$.  For type A groups, every cuspidal element in the Weyl group is Coxeter, as may be seen using cycle decompositions of permutations.  Thus, the cuspidal element $w_\sigma$ is a Coxeter element of $W(M,T)$. 

Part (c).  It is enough to prove the following statement: if $\mathcal G$ is a type A connected reductive complex group with maximal torus $\mathcal T$, and if $w \in W(\mathcal G, \mathcal T)$ is a Coxeter element, then $Z(\mathcal G) = \mathcal T^w$.  First, if $\mathcal G = {\rm PGL}_n$, a simple computation shows that $\mathcal T^w = 1 = Z(\mathcal G)$.  Since $\mathcal G_{\rm ad}$ is a product of projective linear groups and $w$ corresponds to a product of Coxeter elements, this also handles the case of adjoint groups.   In the general case, note that an element $t \in \mathcal T^w$ maps to $(\mathcal T_{\rm ad})^w = 1$ in $\mathcal G_{\rm ad}$, hence $t \in {\rm ker}(\mathcal G \rightarrow \mathcal G_{\rm ad}) = Z(\mathcal G)$.
\end{proof}

\begin{cor} \label{G*_split_cor}
If $G$ is an inner form of an $F$-split group, then 
$$\Lambda_M = X^*(Z(\widehat{M})) = X^*(\widehat{T}^\sigma) = X_*(T)_\sigma.$$
\end{cor}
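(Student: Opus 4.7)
The plan is to peel off the inertia and Frobenius (co)invariants in turn, using the fact that when $G^*$ is $F$-split, the $\sigma$-action on $X^*(\widehat{T})$ collapses to the Coxeter element $w_\sigma$ of Lemma \ref{w_sigma_lem_split}.

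First I would invoke the discussion in subsection \ref{passing_subsec}: since $G^*$ is $F$-split, the group $G^*_{F^{\rm un}}$ is split, and it is already observed there that $G_{F^{\rm un}} \cong G^*_{F^{\rm un}}$. Hence $G_{F^{\rm un}}$ is split, the inertia subgroup $I$ acts trivially on $X^*(\widehat{T}) = X_*(T)$, and consequently also on $X^*(Z(\widehat{M}))$. This reduces the definition $\Lambda_M = X^*(Z(\widehat{M}))_I^\sigma$ to $\Lambda_M = X^*(Z(\widehat{M}))^\sigma$.

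Next I would use equation (\ref{w_sigma}). Since $G^*$ is split, $\sigma^*$ acts trivially on $X_*(T^*)$, so (\ref{w_sigma}) gives $\sigma = w_\sigma$ as operators on $X^*(\widehat{T}) = X_*(T)$. Passing to fixed subschemes of $\widehat{T}$, this yields $\widehat{T}^\sigma = \widehat{T}^{w_\sigma}$, and by Lemma \ref{w_sigma_lem_split}(c) the latter equals $Z(\widehat{M})$. Taking characters gives the middle equality $X^*(Z(\widehat{M})) = X^*(\widehat{T}^\sigma)$ for free, and the last equality is then the formal identity $X^*(\widehat{T}^\sigma) = X^*(\widehat{T})_\sigma = X_*(T)/(1-\sigma)X_*(T) = X_*(T)_\sigma$.

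For the first equality, one further observes that $X^*(Z(\widehat{M})) = X^*(\widehat{T})/(1-w_\sigma)X^*(\widehat{T})$; since $\sigma$ acts on $X^*(\widehat{T})$ as $w_\sigma$, the induced action on this quotient is trivial, hence $X^*(Z(\widehat{M}))^\sigma = X^*(Z(\widehat{M}))$. No step is a real obstacle: all the geometric content is packaged in Lemma \ref{w_sigma_lem_split}(c) and equation (\ref{w_sigma}), and the remaining work is simply to notice that both the inertia and Frobenius actions on $X^*(Z(\widehat{M}))$ degenerate.
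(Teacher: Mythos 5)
Your proposal is correct and follows essentially the same route as the paper's own proof: both hinge on equation~(\ref{w_sigma}) together with Lemma~\ref{w_sigma_lem_split}(c), and both use the $F$-splitness of $T^*$ to turn $\sigma$ into the Coxeter element $w_\sigma$ on $X_*(T)$. The only cosmetic difference is that for the first equality the paper observes directly that $w_\sigma$, being a Weyl group element of $M$, acts trivially on $X^*(Z(\widehat{M}))$, whereas you route through $Z(\widehat{M}) = \widehat{T}^{w_\sigma}$ and the coinvariants description; these are logically interchangeable, so there is nothing genuinely different here.
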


\begin{proof}
The element $\sigma^*$ acts trivially on $Z(\widehat{M}) \hookrightarrow \widehat{T^*}$, since $T^*$ is $F$-split.   Moreover $w_\sigma \in W(M,T)$ acts trivially on $X^*(Z(\widehat{M}))$.  Then using 
(\ref{w_sigma}) it follows that $\sigma$ acts trivially on $X^*(Z(\widehat{M}))_I = X^*(Z(\widehat{M}))$.  This proves the first equality.

The second equality follows similarly using Lemma \ref{w_sigma_lem_split},(c), and the third equality is apparent.
\end{proof}

\section{The transfer homomorphism} \label{transfer_sec}

Now we return to the conventions and notation of subsection \ref{passing_subsec}.  Let $\mathcal A^S_L$ (resp.~$\mathcal A^{S^*}_L$) denote the apartment of $\mathcal B(G(L))$ (resp.~$\mathcal B(G^*(L))$) corresponding to $S$ (resp.~$S^*$).  The twisting $\psi_0$ gives an isomorphism $X_*(S)_{\mathbb R} \rightarrow 
X_*(S^*)_{\mathbb R}$ of the real vector spaces underlying these apartments.  Let $K$ (resp.~$K^*$) denote a special maximal parahoric subgroup of $G(F)$ (resp.~$G^*(F)$) corresponding to a special vertex in $(\mathcal A^S_L)^\sigma$ (resp.~$(\mathcal A^{S^*}_L)^{\sigma^*}$).  Then our goal is to define a canonical algebra homomorphism
$$
t: \calH_{K^*}(G^*) \rightarrow \calH_{K}(G).
$$

We expect $t$ will play a role in the study of Shimura varieties with parahoric level structure and in some related problems in p-adic harmonic analysis.  These issues will be addressed on another occasion.

\subsection{Relating the relative Weyl groups for $G^*$ and $G$}

\begin{prop} \label{relating_Weyl_grps}
Any twist $\psi_0 \in \Psi_M$ induces a map
$$
W(G,A) \rightarrow W(G^*,A^*)/W(M^*,A^*).
$$
\end{prop}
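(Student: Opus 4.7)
The plan is to construct the map by transporting representatives through $\psi_0$, exploiting the identifications
$$
W(G,A) \;\cong\; [W(G,S)/W(M,S)]^\sigma \quad\text{and}\quad W(G^*,A^*)/W(M^*,A^*) \;\cong\; [W(G^*,S^*)/W(M^*,S^*)]^{\sigma^*},
$$
the first coming from Lemma \ref{Weyl_bijection} and the second from the quasi-splitness of $G^*$ and $M^*$. Concretely, given $w \in W(G,A)$, I will first lift it to an element $g \in N_G(S)(F)$ (which is possible by combining Lemma \ref{taut_hom} with Remark 9 of \cite{HR}); since $g \in N_G(S)$, we have $\psi_0(g) \in N_{G^*}(S^*)(F^s)$, and I define the image of $w$ to be the right coset $[\psi_0(g)]\cdot W(M^*,S^*) \in W(G^*,S^*)/W(M^*,S^*)$.

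Next I would check that this coset is independent of the lift $g$. Two such lifts differ by an element of $N_G(S)(F) \cap M(F) = N_M(S)(F)$, and because $\psi_0|_M$ is an $F^{\rm un}$-rational isomorphism $M \to M^*$ sending $S$ to $S^*$, the difference maps into $N_{M^*}(S^*)(F^{\rm un})$, so its class lies in $W(M^*,S^*)$. Using that $g \in N_G(A)(F) \subset N_G(M)(F)$ — so that conjugation by $[\psi_0(g)]$ preserves $W(M^*,S^*)$ — the left-multiplier is absorbed into the right coset. Next I would verify $\sigma^*$-invariance. Since $\tilde\sigma(g) = g$, applying the defining relation $\psi_0 \circ \tilde\sigma \circ \psi_0^{-1} = {\rm Int}(m^*_\sigma)\circ \tilde\sigma^*$ yields
$$
\tilde\sigma^*(\psi_0(g)) = (m^*_\sigma)^{-1}\,\psi_0(g)\,m^*_\sigma,
$$
and reducing modulo $T^*$ gives $\sigma^*([\psi_0(g)]) = (w^*_\sigma)^{-1}[\psi_0(g)]\,w^*_\sigma$ in $W(G^*,S^*)$. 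Using again that $[\psi_0(g)]$ normalizes $W(M^*,S^*)$ and that $w^*_\sigma \in W(M^*,S^*)$, the coset $[\psi_0(g)]\cdot W(M^*,S^*)$ is $\sigma^*$-fixed.

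Finally I would identify the target. Since $G^*$ is $F$-quasi-split, so is its standard Levi $M^*$; by Remark \ref{qs_Weyl_bijection} we have $W(G^*,A^*) = W(G^*,S^*)^{\sigma^*}$ and $W(M^*,A^*) = W(M^*,S^*)^{\sigma^*}$. The obvious map $W(G^*,A^*)/W(M^*,A^*) \to [W(G^*,S^*)/W(M^*,S^*)]^{\sigma^*}$ is visibly injective, and it is surjective because $W(M^*,S^*)$ is a $\sigma^*$-stable standard parabolic subgroup of the Coxeter group $W(G^*,S^*)$ (determined by the $F$-rational Borel $B^*$), and $\sigma^*$ preserves length, so the unique minimal-length representative of any $\sigma^*$-stable coset is automatically $\sigma^*$-fixed. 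The main obstacle is the $\sigma^*$-invariance step, where the twist relation producing $w^*_\sigma$ must be combined with the crucial observation that lifts in $N_G(A)(F)$ automatically normalize $M$, so that the obstruction $w^*_\sigma$ lies in $W(M^*,S^*)$ and is absorbed after conjugation by $[\psi_0(g)]$.
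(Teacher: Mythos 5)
Your proof is correct and follows essentially the same route as the paper: lift $w \in W(G,A)$ to $N_G(S)(F)$, transport via $\psi_0$, show the resulting coset in $W(G^*,S^*)/W(M^*,S^*)$ is well-defined and $\sigma^*$-fixed using the inner-twist relation together with the fact that lifts normalize $M$ (hence $\psi_0(g)$ normalizes $M^*$), and then identify $\bigl(W(G^*,S^*)/W(M^*,S^*)\bigr)^{\sigma^*}$ with $W(G^*,A^*)/W(M^*,A^*)$ via minimal-length representatives and $\sigma^*$-preservation of length. The only cosmetic difference is that you make explicit use of Lemma \ref{Weyl_bijection} and phrase the twist relation via $\tilde\sigma^*$ rather than the paper's $(\sigma^*)^{-1}$ and $m_*$; the substance is the same.
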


\begin{proof}
For $w \in W(G,A)$, choose a lift $n \in N_G(S)^\sigma$ (cf.~Lemma \ref{taut_hom}).  Write
$$
\sigma \circ \psi_0^{-1} \circ (\sigma^*)^{-1} \circ \psi_0 = {\rm Int}(m_\sigma)
$$
for an element $m_\sigma \in N_M(S)(F^s)$.  Set $m_* = \psi_0(\sigma^{-1}(m_\sigma)) \in N_{M^*}(S^*)(F^s)$.  Using $\sigma(n) = n$ and the fact that $\psi_0(n)$ normalizes $M^*$, we obtain
\begin{align*}
(\sigma^*)^{-1}(\psi_0(n)) &= m_* \, \psi_0(n) \, m^{-1}_* \\
&= \psi_0(n) \cdot ( \psi_0(n)^{-1}m_* \psi_0(n) m_*^{-1}) \\
&\in \psi_0(n) \, N_{M^*}(S^*).
\end{align*}
Thus $n \mapsto \psi_0(n)$ induces a well-defined map
$$
W(G,A) \rightarrow \Big(W(G^*,S^*)/W(M^*,S^*)\Big)^{\sigma^*}.
$$
The natural map $W(G^*,S^*)^{\sigma^*} \rightarrow \Big(W(G^*,S^*)/W(M^*,S^*)\Big)^{\sigma^*}$ is surjective.  Indeed, the choice of an $F$-rational Borel subgroup of $G^*$ containing $T^*$ gives us a notion of length on $W(G^*,S^*)$ which is preserved by $\sigma^*$, so that the minimal-length representatives of $\sigma^*$-fixed cosets in $W(G^*,S^*)/W(M^*,S^*)$ are fixed by $\sigma^*$.  It follows that
$$
W(G^*,S^*)^{\sigma^*}/W(M^*,S^*)^{\sigma^*} = \Big(W(G^*,S^*)/W(M^*,S^*)\Big)^{\sigma^*}.
$$
Thus, we have a well-defined map 
$$
W(G,A) \rightarrow W(G^*,S^*)^{\sigma^*}/W(M^*,S^*)^{\sigma^*} = W(G^*,A^*)/W(M^*,A^*)
$$
(cf. Remark \ref{qs_Weyl_bijection}).
\end{proof} 

\subsection{Definition of $t: \calH_{K^*}(G^*) \rightarrow \calH_K(G)$}

The isomorphism $$\widehat{\psi_0}: Z(\widehat{M^*}) ~ \widetilde{\rightarrow} ~  Z(\widehat{M})$$ is Galois-equivariant.  
Combined with the canonical inclusion $Z(\widehat{M^*}) \hookrightarrow \widehat{T^*}$ we see that $\widehat{\psi_0}$ induces a homomorphism 
\begin{equation} \label{psi_0_inv}
\psi_0: X^*(\widehat{T^*})^{\sigma^*}_I  \rightarrow X^*(Z(\widehat{M}))^\sigma_I.
\end{equation}

Since $W(M^*,A^*)$ induces the trivial action on $Z(\widehat{M^*})$, it follows using Proposition \ref{relating_Weyl_grps} that (\ref{psi_0_inv}) is equivariant with respect to the map $W(G,A) \rightarrow W(G^*,A^*)/W(M^*,A^*)$, in an obvious sense.  We thus get an algebra homomorphism
\begin{equation} \label{induced_psi_0}
\psi_0: \mathbb C[X^*(\widehat{T^*})^{\sigma^*}_I]^{W(G^*,A^*)} \rightarrow \mathbb C[X^*(Z(\widehat{M}))^\sigma_I]^{W(G,A)}.
\end{equation}
Since $\Psi_M$ is a torsor for $M^*_{\rm ad}$, one can check that this homomorphism is independent of the choice of $\psi_0$ in $\Psi_M$.  In fact it depends only on the choice of $A$ and $A^*$.  Therefore it makes sense to denote it by $t_{A^*,A}$ in what follows.  
It is easy to check that this homomorphism is surjective when $G^*$ is split over $F$.

\begin{defn} \label{t_defn}
Fix $A$ and $A^*$ as above.  Define $t: \calH_{K^*}(G^*) \rightarrow \calH_K(G)$ to be the unique homomorphism making the following diagram commute
$$
\xymatrix{
\calH_{K^*}(G^*) \ar[r]^{t} \ar[d]_{\wr} & \calH_K(G) \ar[d]_{\wr}  \\
 \mathbb C[X^*(\widehat{T^*})^{\sigma^*}_I]^{W(G^*,A^*)} \ar[r]^{t_{A^*,A}} & \mathbb C[X^*(Z(\widehat{M}))^\sigma_I]^{W(G,A)},
}$$
where the vertical arrows are the Satake isomorphisms.
\end{defn}

Obviously $t$ depends on $K$ and $K^*$.  It is easy to see that $t$ is independent of all other choices used in its construction.  Also, if $G^*$ is split over $F$, $t$ is surjective.

\subsection{Compatibilities with constant term homomorphisms}

Let $A$, $A^*$, $K$, and $K^*$ be fixed as above.  Let $H$ be a semi-standard $F$-Levi subgroup of $G$; this means that $H = C_G(A_H)$ for some subtorus $A_H \subseteq A$.  Let $H^*$ be a semi-standard $F$-Levi subgroup of $G^*$, so that $H^* = C_{G^*}(A^*_{H^*})$ for a subtorus $A^*_{H^*} \subset A^*$.  We have $M \subseteq H$ and $T^* \subseteq H^*$.  Let us suppose that some inner twist $G \rightarrow G^*$ restricts to give an inner twist $H \rightarrow H^*$.  

For example, for any $\psi_0 \in \Psi_M$ as above, we could take $A_H$ to be any subtorus of $A$ and set $A^*_{H^*} = \psi_0(A_H)$ (recalling that $\psi_0(A) \subseteq A^*$).  

Choose any $F$-rational parabolic subgroup $P_H = H N_H$ of $G$ with unipotent radical $N_H$ which contains $H$ as a Levi factor.  Recall the constant term map $c^G_H : \calH_K(G) \rightarrow \calH_{H \cap K}(H)$, which is defined by
\begin{equation} 
c^G_H(f)(h) = \delta^{1/2}_{P_H}(h)\int_{N_H(F)} f(hn) \, dn,
\end{equation}
for $h \in H(F)$, where the Haar measure $dn$ on $N_H(F)$ gives $N_H(F) \cap K$ measure 1.  We have a commutative diagram
\begin{equation} \label{const_term_comp}
\xymatrix{
\calH_K(G) \ar[r]^{\sim} \ar[d]_{c^G_H} & \mathbb C[\Lambda_M]^{W(G,A)} \ar[d] \\
\calH_{H \cap K}(H) \ar[r]^{\sim} & \mathbb C[\Lambda_M]^{W(H,A)},}
\end{equation}
where the horizontal arrows are the Satake isomorphisms, and the right vertical arrow is the inclusion homomorphism.  It follows that $c^G_H$ is an injective algebra homomorphism which is independent of the choice of $F$-rational parabolic subgroup $P_H \subseteq G$ which contains $H$ as a Levi factor.

The following proposition is proved using (\ref{const_term_comp}) and the definitions.

\begin{prop} \label{constant_term}
The following diagram commutes:
$$
\xymatrix{
\calH_{K^*}(G^*) \ar[r]^t \ar[d]_{c^{G^*}_{H^*}} & \calH_K(G) \ar[d]_{c^G_M} \\
\calH_{H^* \cap K^*}(H^*) \ar[r]^{t} & \calH_{H \cap K}(H).}
$$
\end{prop}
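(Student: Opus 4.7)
The plan is to translate the entire square to the Satake side, where commutativity becomes transparent. First I would apply the Satake isomorphism of Theorem \ref{Sat_thm} to all four Hecke algebras. Since $G^*$ is quasi-split, the minimal $F$-Levi of $G^*$ is $T^* = M^*$; since $T^* \subseteq H^*$ and $H^*$ is itself quasi-split (as the Levi of an $F$-rational parabolic of $G^*$), $T^*$ is also the minimal $F$-Levi of $H^*$. Analogously, $M$ is the minimal $F$-Levi of both $G$ and $H$, and $A$, $A^*$ remain maximal $F$-split in $H$, $H^*$ respectively. The Satake images of the four Hecke algebras therefore live in the rings $\mathbb{C}[X^*(\widehat{T^*})^{\sigma^*}_I]^{W(?,A^*)}$ for $? = G^*, H^*$, and $\mathbb{C}[X^*(Z(\widehat{M}))^\sigma_I]^{W(?,A)}$ for $? = G, H$.

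Next I would identify the four arrows on the Satake side. By the compatibility (\ref{const_term_comp}), the vertical constant-term maps $c^{G^*}_{H^*}$ and $c^G_H$ become the evident inclusions of invariant subrings induced by $W(H^*,A^*) \subseteq W(G^*,A^*)$ and $W(H,A) \subseteq W(G,A)$. By Definition \ref{t_defn}, applied both to the pair $(G^*, G)$ and to the pair $(H^*, H)$ (the latter being legitimate by our standing hypothesis that $\psi_0$ restricts to an inner twist $H \to H^*$), both horizontal arrows are induced by the single homomorphism $\widehat{\psi_0}: X^*(\widehat{T^*})^{\sigma^*}_I \to X^*(Z(\widehat{M}))^\sigma_I$ of (\ref{psi_0_inv}).

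Commutativity is then formal: starting from $f$ in the upper-left corner, both paths around the square yield the pullback $f \circ \widehat{\psi_0}$, viewed inside $\mathbb{C}[X^*(Z(\widehat{M}))^\sigma_I]^{W(H,A)}$. The only point that genuinely requires checking is that the map $W(H,A) \to W(H^*,A^*)/W(M^*,A^*) = W(H^*,A^*)$ furnished by Proposition \ref{relating_Weyl_grps} for the pair $(H, H^*)$ coincides with the restriction to $W(H,A)$ of the analogous map for $(G, G^*)$; this is transparent from the construction, since in both cases the map is induced by $n \mapsto \psi_0(n)$ on lifts in $N(S)^\sigma$. Accordingly there is no substantive obstacle — the proof consists of assembling the Satake isomorphism, Definition \ref{t_defn}, and (\ref{const_term_comp}), and observing that the resulting square commutes at the level of the underlying map $\widehat{\psi_0}$.
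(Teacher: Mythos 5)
Your proposal is correct and fills in exactly the argument the paper intends, whose own proof is the single sentence that the proposition "is proved using (\ref{const_term_comp}) and the definitions." Translating all four corners via the Satake isomorphism, recognizing that the vertical arrows become inclusions of $W$-invariant subalgebras by (\ref{const_term_comp}) and the horizontal arrows are both induced by the same map $\widehat{\psi_0}$ on underlying group algebras by Definition \ref{t_defn}, is precisely what the paper means; your additional check that the Weyl-group map of Proposition \ref{relating_Weyl_grps} for $(H,H^*)$ restricts compatibly from that for $(G,G^*)$, and your implicit correction of the typo $c^G_M$ to $c^G_H$ in the right vertical arrow, are both appropriate.
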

\qed

Taking $H = M$, the diagram shows that in order to compute $t$, it is enough to compute it in the case where $G_{\rm ad}$ is anisotropic.  In that case, if $f \in \calH_{K^*}(G^*)$, the function $t(f)$ is given by summing $f$ over the fibers of the Kottwitz homomorphism $k_{G^*}(F)$.

\small
\bigskip
\obeylines
\noindent
University of Maryland
Department of Mathematics
College Park, MD 20742-4015 U.S.A.
email: tjh@math.umd.edu, srostami@math.umd.edu

\end{document}